\documentclass[11pt]{scrartcl} 

\usepackage[latin1]{inputenc}
\usepackage{color,graphicx}
\usepackage{amsfonts,amssymb,amsthm,amsmath,booktabs}
\usepackage{placeins}

\ifx\pdftexversion\undefined
\usepackage{nohyperref}
\else 
\usepackage[colorlinks=true,linkcolor={blue},citecolor={blue},urlcolor={blue},
  pdfauthor={Stefan Dohr, Christian Kahle, Sergejs Rogovs, Piotr Swierczynski},
  pdfstartview={Fit}]{hyperref}
\fi

%%%%%%%%%%%%%%%%
 
\usepackage[linesnumbered]{algorithm2e}

%for tikz 
\usepackage{pgfplots}  
\usepackage{xcolor}
\usepackage{tikz}
\usetikzlibrary{plotmarks,backgrounds}
\usepgfplotslibrary{external}
 \tikzexternalize[prefix=images/TikZ_converted/]
 \tikzexternaldisable
 
 %useful during writing
%\usepackage{refcheck}
\usepackage{todonotes}

%%%%%%%%%%%%%%%%%%%%%%%%

%TODO: das meiste der folgenden Dinge wird doch von der Umgebung oder den ams paketen geliefert!!!
% moeglichst diese verwenden!!!
%Spaeter muessen wir sowieso den Style des Journals verwenden,
%
\newcommand{\abssec}[1]{\noindent\normalsize {\bfseries #1\quad }\ignorespaces}
\newenvironment{acknowledgement}{\abssec{Acknowledgement.}}{\par\vspace{.1in}}

\theoremstyle{plain}
\newtheorem{theorem}{Theorem}%[section]
\newtheorem{corollary}[theorem]{Corollary}

\newtheorem{lemma}[theorem]{Lemma}

\newtheorem{assumption}[theorem]{Assumption}

\theoremstyle{definition}
\newtheorem{remark}[theorem]{Remark}

\numberwithin{equation}{section}

\def\N{\mathbb{N}}

\definecolor{darkgreen}{rgb}{0.0, 0.5, 0.3}

\newcommand{\norm}[1]{\left\lVert#1\right\rVert}

\DeclareMathOperator*{\argmin}{arg\,min}

\begin{document}

\title{A FEM for an optimal control problem subject to the fractional Laplace equation
\thanks{This work was supported by the International Research Training Group 1754,
 funded by the German Research Foundation (DFG), and the Austrian Science Fund
 (FWF).} 
 }

\author{Stefan Dohr~\footnotemark[2]\and
Christian Kahle~\footnotemark[3]\and
Sergejs Rogovs~\footnotemark[4]\and
Piotr Swierczynski~\footnotemark[3]
}

\date{\today}

\maketitle

\renewcommand{\thefootnote}{\fnsymbol{footnote}}
\footnotetext[2]{Institut f\"ur Angewandte Mathematik, Technische Universit\"at Graz, 8010 Graz, Austria, \texttt{stefan.dohr@tugraz.at}}
\footnotetext[3]{Center for Mathematical Sciences, Technische Universit\"at M\"unchen, 85748 Garching bei M\"unchen, Germany, 
\texttt{ \{christian.kahle,piotr.swierczynski\}@ma.tum.de}}
\footnotetext[4]{Institut f\"ur Mathematik und Computergest\"utzte Simulation, Universit\"at der Bundeswehr M\"unchen, 85577 Neubiberg, Germany, \texttt{sergejs.rogovs@unibw.de}}

\begin{abstract}
We study the numerical approximation of linear-quadratic 
optimal control problems subject to the fractional Laplace equation with its
spectral definition.
We compute an approximation of the state equation using a discretization of the
Balakrishnan formula that is based on a finite element discretization
in space and a sinc quadrature approximation of the additionally involved integral.
A tailored approach for the numerical solution of the resulting linear systems is proposed.
  
Concerning the discretization of the optimal 
control problem we consider two schemes. 
The first one is the variational approach,
where the control set is not discretized, 
and the second one is the fully discrete scheme where the 
control is discretized by piecewise constant functions. We derive finite element
error estimates for 
both methods and illustrate our results by numerical experiments.
\end{abstract}

\noindent \textbf{Keywords.} fractional Laplacian, linear-quadratic optimal control problem, finite element method, a priori error estimates, Dunford--Taylor integral
\\
\\
%https://mathscinet.ams.org/msc/msc2010.html
% 35 PDE
% 49 Cov and OPTControl
% 76M10: FiniteElements
%
\noindent \textbf{AMS subject classification.} 
65N30 % Finite Elements
35J15 % 	Second-order elliptic equations
49K20 % 	Problems involving partial differential equations

%\todoin{CLEAR CREF FOR FINAL VERSION}

\section{Introduction}
%In this work we derive a priori FE error estimates for finite element approximations of optimal control problems subject to the fractional Laplace equation.
Let $\Omega \subset \mathbb{R}^{n}$ ($n \in \{2,3\}$) be a bounded and convex domain with boundary $\Gamma := \partial \Omega$ and $s \in (0,1)$. 
For $u_{d} : \Omega \rightarrow \mathbb{R}$ we
define the objective functional
\begin{equation}\label{eq:Functional}
J(u, z) := \frac{1}{2}\|u - u_d\|_{L^{2}(\Omega)}^{2} + \frac{\mu}{2} \|z\|_{L^{2}(\Omega)}^{2},
\end{equation}
where $\mu > 0$ denotes a regularization parameter. 
In this work we consider the optimal control problem of finding
\begin{equation}
\label{eq:min_J}
\argmin_z J(u,z),
\end{equation}
subject to the \textit{fractional state equation}
\begin{equation}
\label{eq:state_equation}
\left(-\Delta\right)^{s}u = z \textrm{ in } \Omega, \quad u=0 \textrm{ on } \Gamma,
\end{equation}
and the \textit{control constraints}
\begin{equation}
\label{eq:control_constraints}
a \leq z(x) \leq b \textrm{ a.e. in } \Omega,
\end{equation}
with constants $a, b \in \mathbb{R}$  satisfying $a \leq 0 \leq b$.
Here, we understand the operator $(-\Delta)^{s}$ in the sense of its spectral
definition, compare e.g.~\cite{Cabre, Capella, Nochetto}.
 
%If $s$ was 1, i.e. if \cref{eq:state_equation} was the Poisson equation, then
%problem \textup {(\ref {eq:min_J})} was \textit{the} mother problem of optimal.
The main difficulty in studying this problem is the nonlocality of the fractional Laplace operator 
% \cite{Cabre, Cafarelli, Capella, Nochetto, Stinga}.
\cite{Cabre}. 
One way to overcome this issue is based on the  Cafarelli--Silvestre extension~\cite{Cafarelli} on unbounded domains
and its extension to bounded domains~\cite{Cabre, Capella, Stinga}.
 In this approach, an auxiliary problem in an extended domain 
 $\mathcal{C} := \Omega \times (0,\infty)$ is introduced and the solution of
  the state equation~\eqref{eq:state_equation} is then given as the Dirichlet trace on $\Omega \times \left\{0\right\}$ of the solution to the extended problem.
Exponential decay of the solution in the artificial dimension allows construction of different numerical methods, 
see e.g. \cite{Bonito, MeidPSV_2017_hpFE_fracDiff, Nochetto}.  
In these publications, the problem is discretized by introducing a tensor product mesh of the domain 
$\mathcal{C}_{\mathcal{Y}} = \Omega \times (0,Y)$, 
which is constructed by a conformal triangulation of $\Omega$ 
and a graded mesh in the artificial direction, 
see e.g. \cite[Section 5.1]{Nochetto}. % or \cite[Section 4.2]{Nochetto2}.
%Therefore, standard linear FEM can be used in the truncated domain and error 
%estimates for the approximation both in the extended domain and in the domain of interest~$\Omega$ can be derived.
A convergence rate of $h^{1 + s}$ (up to some logarithmic term) 
in the $L^2(\Omega)$-norm can be obtained~\cite{Antil, Nochetto2}, 
provided that $z \in \mathbb{H}^{1-s}(\Omega)$, where $h$ denotes the global mesh parameter.
%On such meshes, using linear elements, the convergence rate with respect to the mesh size $h$ in the $L^2(\Omega)$-norm is 
%$1+s$ (up to some log-term), provided that $z \in \mathbb{H}^{1-s}(\Omega)$ \cite[Proposition 28]{Nochetto2}.
However, numerical experiments show that this convergence rate is not optimal in a specific range of fractional powers~$s$.
The cost of solving the problem is related to the number of elements in $\mathcal{C}_{\mathcal{Y}}$,
and not only to the number of elements in $\Omega$, 
resulting in an increased computational complexity.
This issue is first overcome in \cite{MeidPSV_2017_hpFE_fracDiff} by 
exploiting $p$-finite elements in the extended direction.

An alternative approach for solving~\textup {(\ref {eq:state_equation})} uses the 
Balakrishnan representation formula \cite[IX. 11.]{Yosida_FuncAna}, namely for
$s \in (0,1)$ and $z \in \mathbb{H}^{-s}(\Omega)$
\begin{equation}
\label{eq:balakrishnan_formula}
(-\Delta)^{-s}z = \frac{\sin{(s\pi)}}{\pi} \int_{0}^{\infty} \nu^{-s} (\nu I - \Delta)^{-1}z d \nu.
\end{equation}
Numerical approximation of \textup {(\ref {eq:balakrishnan_formula})} is then based on a
suitable quadrature formula for \textup {(\ref {eq:balakrishnan_formula})} with respect to
$\nu$ and a discretization of the operator $\nu I-\Delta$ using the finite element method,
see \cite{Bonito}.

While the numerical analysis of the optimal control problem
\eqref{eq:min_J}--\eqref{eq:control_constraints} using an equivalent formulation with the Cafarelli--Silvestre extension is well established \cite{Antil}, the numerical analysis using the Balakrishnan formula is still open.

In this article we propose and analyze two discrete schemes for the approximation of the solution to the
 optimal control problem \eqref{eq:min_J}--\eqref{eq:control_constraints} using
 the Balakrishnan representation of the solution $u$ of the state equation
 \eqref{eq:state_equation}.
 Both schemes rely on a finite element discretization of the operator $\nu I - \Delta$ in \eqref{eq:balakrishnan_formula} 
 and a sinc quadrature approximation \cite{2018_BonitoLeiPasciak} of the integral in \eqref{eq:balakrishnan_formula}. 
 The first method is the variational discretization approach \cite{Hinze}, 
 where the set of controls is not discretized a priori. However, it inherits its approximation properties from the approximation of the adjoint state. 
 The second one uses a fully discrete setting, where the set of controls is discretized by piecewise constant functions \cite{Arada2002, Casas2005, Roesch2006}. 
 We derive $L^2(\Omega)$-error estimates for the state and control for both types of the FE discretization of the optimal control problem.

Regarding the variational approach for the discretization of the optimal control problem \eqref{eq:min_J}--\eqref{eq:control_constraints} 
we show an optimal convergence rate of $h^{\min{(2, 3/2+2s - \varepsilon)}}$ for the control and the state in the $L^{2}(\Omega)$-norm, 
whereas using the extension approach \cite{Antil} yields a convergence rate of $h^{1+s}$ (up to some logarithmic term). 
In the case of the fully discrete scheme we show the expected linear convergence for the control in the $L^{2}(\Omega)$-norm 
and for the state in the $\mathbb{H}^{s}(\Omega)$-norm. 
Numerically we also consider the post-processing approach \cite{MeyerRoesch2004}
for the optimal control and measure again the same rate of  $h^{\min{(2, 3/2+2s - \varepsilon)}}$ for the 
post-processed optimal control.
Similar results are shown for the extension approach in \cite{Antil}.
While the convergence rate for the optimal control is optimal, as confirmed by numerical experiments, 
there is still a gap between the theoretical and practical rates for the state, 
which will be addressed in future work. 

The outline of this paper is as follows.
 In Section~\ref{sec:fractional_optimal_control} we review existence and uniqueness results for the 
 fractional optimal control problem based on \cite{Antil} as well as regularity properties of the optimal control problem. 
 The numerical analysis of the mentioned discretization methods is conducted in Section~\ref{sec:error_estimates}, 
 starting with the derivation of the error estimates for the discretization of the state equation \eqref{eq:state_equation} 
 using the Balakrishnan formula. 
 In Section~\ref{subsec:semidiscrete_scheme} we study the convergence properties of the optimal control 
 and state using the semidiscrete approach, 
 while Section~\ref{subsec:full_discrete_scheme} is devoted to the numerical analysis of the fully discrete scheme. 
 In Section~\ref{sec:Implementation} we introduce a solver for the finite element approximation of the problem. 
 Numerical results validating the theoretical convergence results for the proposed discretization techniques are presented in Section~\ref{sec:Numerics}.

\section{Existence and regularity of optimal controls}
\label{sec:fractional_optimal_control}
In this section we review existence and uniqueness as well as the regularity results for the 
optimal control problem \eqref{eq:min_J}--\eqref{eq:control_constraints} based on \cite[Sec. 3]{Antil}.
We start this section with a brief introduction of the  spectral definition of the fractional operator
$(-\Delta)^s$ following~\cite{Cabre, Capella}.

The eigenfunctions $\left\{\varphi_{k}\right\}_{k\in \N}$ with eigenvalues $\left\{\lambda_{k}\right\}_{k \in \N}$ of the Laplace operator, i.e.,
\begin{equation*}
-\Delta \varphi_{k} = \lambda_{k} \varphi_{k} \textrm{ in } \Omega, \quad \varphi_{k} = 0 \textrm{ on } \Gamma, \quad k \in \N
\end{equation*}
form an orthonormal basis of $L^{2}(\Omega)$. The spectral fractional Laplace operator for ${w \in C_{0}^{\infty}(\Omega)}$ is then defined as
\begin{equation*}
(-\Delta)^{s}w:= \sum_{k = 1}^{\infty} \lambda_{k}^{s} w_{k} \varphi_{k}, \quad w_{k} := \int_{\Omega} w \varphi_{k} dx, \quad k \in \N.
\end{equation*}
This definition can be extended by density to the space $\mathbb{H}^{s}(\Omega)$ \cite{Antil, Nochetto} defined as
\begin{equation}
\label{eq:fractional_sobolev_spaces}
\mathbb{H}^{s}(\Omega) := \left\{w = \sum_{k=1}^{\infty} w_{k} \varphi_{k} : \sum_{k=1}^{\infty} \lambda_{k}^{s}w_{k}^{2} < \infty \right\} = 
\begin{cases}
H^{s}(\Omega) \equiv H_{0}^{s}(\Omega) & \textrm{if } s \in (0,\frac{1}{2}),\\
H_{00}^{1/2}(\Omega) & \textrm{if } s = \frac{1}{2}, \\
H_{0}^{s}(\Omega) & \textrm{if } s \in (\frac{1}{2}, 1).
\end{cases}
\end{equation}
The characterization of the fractional Sobolev spaces on the right hand side in \eqref{eq:fractional_sobolev_spaces} can be found, e.g., in \cite{mclean2000strongly}. For $s \in [1,2]$ we set $\mathbb{H}^{s}(\Omega) := H^{s}(\Omega) \cap H^{1}_{0}(\Omega)$, whereas $\mathbb{H}^{0}(\Omega) := L^{2}(\Omega)$. The dual space  of $\mathbb{H}^{s}(\Omega)$ we denote by $\mathbb{H}^{-s}(\Omega)$.
%If $z \in L^{2}(\Omega)$, then $u \in \mathbb{H}^{2s}(\Omega)$. 
We stress that this definition of $(-\Delta)^s$ inherently assumes homogeneous Dirchlet boundary data (in a suitable sense). 
For a generalization to inhomogeneous Dirichlet boundary data we refer to \cite{Antil_Pfefferer_Rogovs} and the references therein.

\bigskip

Let $u_{d} \in L^{2}(\Omega)$ and $a, b \in \mathbb{R}$ with $a \leq 0 \leq b$ be given.
%Additionally,  if $s \in (0,\frac{1}{4})$, we require $a(x) \leq 0 \leq b(x)$ a.e. on
%$\Gamma$.
We define the set of admissible controls $Z_{ad}$ by
\begin{equation*}
Z_{ad} := \left\{w \in L^{2}(\Omega): a \leq w(x) \leq b \textrm{ a.e. in } \Omega \right\}.
\end{equation*}
Let  ${S: \mathbb{H}^{-s}(\Omega) \rightarrow \mathbb{H}^{s}(\Omega)}$ denote the control to state operator defined as $Sz := u$, 
where $u \in \mathbb{H}^{s}(\Omega)$ is the unique solution of the state equation \eqref{eq:state_equation}. 
It holds, that for any $z \in \mathbb{H}^{-s}(\Omega)$, 
the boundary value problem \eqref{eq:state_equation} has a unique solution $u \in \mathbb{H}^{s}(\Omega)$, see e.g. \cite{MeidPSV_2017_hpFE_fracDiff}.
We may also consider the operator $S$ acting on $L^{2}(\Omega)$ with range in $L^{2}(\Omega)$. Note also that $S$ is self-adjoint, since the operator $(-\Delta)^{s}$ is self-adjoint. The adjoint state $p \in \mathbb{H}^{s}(\Omega)$ for $z \in \mathbb{H}^{-s}(\Omega)$ is then given by $p = S(Sz - u_{d})$. 
In \cite[Section 3.1]{Antil} the existence and uniqueness of a solution to the optimal control problem \eqref{eq:min_J}--\eqref{eq:control_constraints} is shown. Let us recall the main result from that reference.

\begin{theorem}[existence, uniqueness, and optimality conditions, {\cite[Section 3.1]{Antil}}]
The fractional optimal control problem \eqref{eq:min_J}--\eqref{eq:control_constraints} has a unique optimal solution 
$(\bar{u}, \bar{z}) \in \mathbb{H}^{s}(\Omega) \times Z_{ad}$. 
These fulfill the  necessary and sufficient optimality conditions
\begin{align}
  \bar{u} &= S \bar{z} \in \mathbb{H}^{s}(\Omega),\\
  \bar{p} &= S(\bar{u}-u_{d}) \in \mathbb{H}^{s}(\Omega),\\
  \bar{z} \in Z_{ad}, &\quad (\mu \bar{z} + \bar{p}, z - \bar{z})_{L^{2}(\Omega)} \geq 0 \quad \textrm{for all } z \in Z_{ad}.
  \label{eq:variational_inequality}
\end{align}
\end{theorem}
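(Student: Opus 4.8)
The plan is to reduce the constrained problem to the minimization of a reduced cost functional over the admissible set $Z_{ad}$ and then invoke the direct method of the calculus of variations together with convexity. First I would introduce $j\colon L^2(\Omega)\to\mathbb{R}$, $j(z):=J(Sz,z)=\tfrac12\|Sz-u_d\|_{L^2(\Omega)}^2+\tfrac{\mu}{2}\|z\|_{L^2(\Omega)}^2$, using that $S$ maps $L^2(\Omega)$ continuously into $\mathbb{H}^s(\Omega)$ and hence compactly into $L^2(\Omega)$; the problem \eqref{eq:min_J}--\eqref{eq:control_constraints} is then equivalent to minimizing $j$ over $Z_{ad}$. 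The set $Z_{ad}$ is nonempty (since $a\le 0\le b$ gives $0\in Z_{ad}$), convex, bounded and closed in $L^2(\Omega)$, hence weakly sequentially compact, while $j$ is continuous and convex, thus weakly lower semicontinuous, and coercive because $j(z)\ge\tfrac{\mu}{2}\|z\|_{L^2(\Omega)}^2$. The direct method then yields a minimizer $\bar z\in Z_{ad}$, and we set $\bar u:=S\bar z$. Strict convexity of $j$, coming from the term $\tfrac{\mu}{2}\|z\|_{L^2(\Omega)}^2$ with $\mu>0$, gives uniqueness of $\bar z$ and therefore of $\bar u$.

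For the optimality system I would compute the Fréchet derivative of the quadratic functional $j$: since $S$ is linear and bounded, $j'(z)h=(Sz-u_d,Sh)_{L^2(\Omega)}+\mu(z,h)_{L^2(\Omega)}=(S^\ast(Sz-u_d)+\mu z,h)_{L^2(\Omega)}$ for all $h\in L^2(\Omega)$. Because $(-\Delta)^s$ is self-adjoint, so is $S$ on $L^2(\Omega)$, whence $S^\ast=S$; introducing the adjoint state $\bar p:=S(\bar u-u_d)=S(S\bar z-u_d)\in\mathbb{H}^s(\Omega)$ we obtain $j'(\bar z)=\mu\bar z+\bar p$. The standard variational characterization of the minimizer of a convex differentiable functional over a convex set then gives $(\mu\bar z+\bar p,z-\bar z)_{L^2(\Omega)}\ge 0$ for all $z\in Z_{ad}$, which is exactly \eqref{eq:variational_inequality}; convexity of $j$ ensures that this first-order condition is also sufficient for global optimality, so the three displayed relations together are necessary and sufficient.

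The steps that require care rather than genuine difficulty are verifying the mapping and self-adjointness properties of $S$ on $L^2(\Omega)$ directly from the spectral definition and the $L^2$-orthonormality of $\{\varphi_k\}$, and justifying the differentiation of the $L^2$-inner products, which is routine for quadratic functionals. Overall this is the Hilbert-space linear-quadratic theory specialized to $S=(-\Delta)^{-s}$, so I do not expect any step to present a real obstacle; the only genuinely problem-specific ingredient is the regularity statement $\bar u,\bar p\in\mathbb{H}^s(\Omega)$, which is inherited from the mapping properties of $S$.
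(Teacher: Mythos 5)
Your argument is correct and is exactly the standard reduced-functional route (direct method plus strict convexity from $\mu>0$, then the first-order variational inequality using $S^\ast=S$) that the paper's cited source \cite[Section 3.1]{Antil} follows; the paper itself gives no proof beyond that citation. No gaps — the only minor remark is that weak lower semicontinuity of the tracking term already follows from convexity and continuity, so the compactness of $S$ you mention is not actually needed.
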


For $\mu > 0$ and $\bar{p} = S(\bar{u}-u_{d})$ the variational inequality \eqref{eq:variational_inequality} is equivalent to the projection formula \cite{Fredi}
\begin{equation*}
%\label{eq:projection_formula}
\bar{z}(x) = \textrm{proj}_{[a, b]} \left(-\frac{1}{\mu} \bar{p}(x)\right)
\end{equation*}
where $\textrm{proj}_{[a, b]}(v) := \min{\left\{b, \max{\left\{a,v\right\}}\right\}}$.
% Assuming that $\Omega$ is such that
%\begin{equation}
%\label{eq:stability_H2}
%\norm{w}_{H^{2}(\Omega)} \leq c \norm{\Delta w}_{L^{2}(\Omega)}  \quad
%\textrm{for all } w \in H^{2}(\Omega) \cap H^{1}_{0}(\Omega),
%\end{equation}
%and, for some values of $s \in (0,1)$, that $a \leq 0 \leq b$ on $\partial \Omega$, 
Since we assume that $\Omega$ is a convex domain and that $a \leq 0 \leq b$ we can prove the following regularity results for the control.
% The stability estimate \eqref{eq:stability_H2} holds, for instance, if the domain %$\Omega$ is convex. 
\begin{lemma}[$H^{1}$-regularity of the optimal control, {\cite[Lemma 3.5]{Antil}}]
%\label{lem:h1_regularity_control}
Let $\bar{z} \in Z_{ad}$ be the optimal control and $u_{d} \in \mathbb{H}^{1-s}(\Omega)$. 
Then $\bar{z} \in H_0^{1}(\Omega)$.
\end{lemma}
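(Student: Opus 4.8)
The plan is to exploit the projection formula $\bar z=\textrm{proj}_{[a,b]}(-\tfrac1\mu\bar p)$ recorded above, which reduces the claim to two facts: first, that the adjoint state $\bar p$ belongs to $H_0^1(\Omega)$; and second, that truncation by the interval $[a,b]$ maps $H_0^1(\Omega)$ into itself, which is where the hypothesis $a\le0\le b$ enters. Granting both, $-\tfrac1\mu\bar p\in H_0^1(\Omega)$ and hence $\bar z=\textrm{proj}_{[a,b]}(-\tfrac1\mu\bar p)\in H_0^1(\Omega)$, which is the assertion.

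\smallskip

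\noindent\textbf{Step 1: $\bar p\in H_0^1(\Omega)$.} Here I would use the smoothing property of the solution operator on the spectral scale, namely $S=(-\Delta)^{-s}\colon\mathbb{H}^{r}(\Omega)\to\mathbb{H}^{r+2s}(\Omega)$ for $r\ge0$ with $r+2s\le2$; this is immediate from the eigenfunction expansion, since $w=\sum_k w_k\varphi_k$ is mapped to $\sum_k\lambda_k^{-s}w_k\varphi_k$ and $\sum_k\lambda_k^{r+2s}(\lambda_k^{-s}w_k)^2=\sum_k\lambda_k^{r}w_k^2$. Starting from $\bar z\in Z_{ad}\subset L^2(\Omega)=\mathbb{H}^0(\Omega)$ this gives $\bar u=S\bar z\in\mathbb{H}^{2s}(\Omega)$, and together with the hypothesis $u_d\in\mathbb{H}^{1-s}(\Omega)$ one obtains $\bar u-u_d\in\mathbb{H}^{\min\{2s,\,1-s\}}(\Omega)$, hence $\bar p=S(\bar u-u_d)\in\mathbb{H}^{\min\{4s,\,1+s\}}(\Omega)$. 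If $s\ge\tfrac14$ this exponent is already $\ge1$, and since $\mathbb{H}^{r}(\Omega)\subset\mathbb{H}^{1}(\Omega)=H_0^1(\Omega)$ for $r\ge1$, we conclude $\bar p\in H_0^1(\Omega)$. If $s<\tfrac14$ I would bootstrap: substituting the projection formula back in --- which is legitimate because $v\mapsto\textrm{proj}_{[a,b]}(v)$ is bounded on $\mathbb{H}^{\sigma}(\Omega)$ for $0\le\sigma<1$ (a consequence of the Gagliardo seminorm bound $[g\circ v]_{H^\sigma}\le[v]_{H^\sigma}$ for the $1$-Lipschitz map $g=\textrm{proj}_{[a,b]}$ with $g(0)=0$, together with extension by zero and the identification $\mathbb{H}^\sigma(\Omega)=H^\sigma_0(\Omega)$) --- one gets $\bar z\in\mathbb{H}^{4s}(\Omega)$, then $\bar u\in\mathbb{H}^{6s}(\Omega)$ and $\bar p\in\mathbb{H}^{\min\{8s,\,1+s\}}(\Omega)$, and so on, each pass gaining $2s$ derivatives for $\bar z$ and hence for $\bar p$ until $\bar p\in\mathbb{H}^{r}(\Omega)$ with some $r\ge1$. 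Since $2s>0$ this takes only finitely many passes, so $\bar p\in\mathbb{H}^{1}(\Omega)=H_0^1(\Omega)$; in fact one more pass gives $\bar p\in\mathbb{H}^{1+s}(\Omega)$.

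\smallskip

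\noindent\textbf{Step 2, and the expected obstacle.} For the truncation fact I would argue by approximation: given $w\in H_0^1(\Omega)$, choose $w_n\in C_c^\infty(\Omega)$ with $w_n\to w$ in $H^1(\Omega)$; since $a\le0\le b$ we have $\textrm{proj}_{[a,b]}(0)=0$, so $\textrm{proj}_{[a,b]}(w_n)$ is continuous, vanishes outside $\supp w_n$, and lies in $H^1(\Omega)$ with $\nabla\,\textrm{proj}_{[a,b]}(w_n)=\mathbf{1}_{\{a<w_n<b\}}\nabla w_n$ by the Stampacchia chain rule, hence $\textrm{proj}_{[a,b]}(w_n)\in H_0^1(\Omega)$; letting $n\to\infty$ and using the continuity of $v\mapsto\textrm{proj}_{[a,b]}(v)$ on $H^1(\Omega)$ yields $\textrm{proj}_{[a,b]}(w)\in H_0^1(\Omega)$, and taking $w=-\tfrac1\mu\bar p$ finishes the proof. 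The hard part will be making Step~1 fully rigorous --- in particular the stability of $\textrm{proj}_{[a,b]}$ on the fractional spaces $\mathbb{H}^\sigma(\Omega)$ that legitimizes each pass of the bootstrap (again through the chain rule for Lipschitz superpositions), and the use of the sign condition $a\le0\le b$ to upgrade membership in $H^1(\Omega)$ to membership in $H_0^1(\Omega)$; without $a\le0\le b$ the superposition would only be controlled in $H^1(\Omega)$, so one would obtain merely $\bar z\in H^1(\Omega)$.
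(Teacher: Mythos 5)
Your proposal is correct and follows essentially the same route as the paper, which only sketches the argument (bootstrapping through the projection formula, the $2s$-smoothing of $S$ on the spectral scale, Lipschitz stability of $\textrm{proj}_{[a,b]}$ on $\mathbb{H}^{\sigma}(\Omega)$, and the sign condition $a\le 0\le b$ to remain in the zero-trace spaces) and defers the details to \cite[Lemma 3.5]{Antil}. The only nitpick is that each bootstrap pass gains $4s$ (not $2s$) derivatives for $\bar z$, which is harmless since all that matters is that the gain is a fixed positive amount.
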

\begin{proof}
The proof is based on bootstrapping. 
We only comment on the case $s \in (0,\frac{1}{4})$. In this case,
an intermediate regularity result is $\bar{z} \in \mathbb{H}^{s}(\Omega)$. 
As $\bar z = \textrm{proj}_{[a,b]}\left( -\frac{1}{\mu}\bar p\right)$ this,
in turn, requires $a \leq 0 \leq b$.
\end{proof}
\begin{lemma}
 \label{lem:regularity_control}
Let $\bar{z} \in Z_{ad}$ be the solution of the optimal control problem \textup {(\ref {eq:min_J})}--\textup {(\ref {eq:control_constraints})} 
with $u_{d} \in \mathbb{H}^{3/2}(\Omega)$. 
Then $\bar{z} \in \mathbb{H}^{3/2-\varepsilon}(\Omega)$, where $\varepsilon$ is a positive, arbitrary small number.
\end{lemma}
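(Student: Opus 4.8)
The plan is to derive the claimed regularity by a finite bootstrapping argument in the spirit of the preceding lemma, cycling between the state $\bar u$, the adjoint state $\bar p$, and the projection formula $\bar z=\textrm{proj}_{[a,b]}(-\mu^{-1}\bar p)$. Three ingredients enter: (i) the smoothing property of the solution operator, $S=(-\Delta)^{-s}\colon\mathbb{H}^{r}(\Omega)\to\mathbb{H}^{\min(r+2s,\,2)}(\Omega)$ for $r\in[0,2]$, which follows from the spectral representation of $S$ together with the convexity of $\Omega$ (so that $\mathbb{H}^{2}(\Omega)=H^{2}(\Omega)\cap H^{1}_{0}(\Omega)$); (ii) the assumed data regularity $u_d\in\mathbb{H}^{3/2}(\Omega)$; and (iii) the mapping behaviour of $\textrm{proj}_{[a,b]}=\min(b,\max(a,\cdot))$ on the fractional scale. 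As a starting point, since $\mathbb{H}^{3/2}(\Omega)\hookrightarrow\mathbb{H}^{1-s}(\Omega)$ for every $s\in(0,1)$, the previous lemma already provides $\bar z\in H^{1}_{0}(\Omega)=\mathbb{H}^{1}(\Omega)$.

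For the inductive step, suppose $\bar z\in\mathbb{H}^{t}(\Omega)$ with $t\in[1,3/2)$. Then $\bar u=S\bar z\in\mathbb{H}^{\min(t+2s,\,2)}(\Omega)$, hence $\bar u-u_d\in\mathbb{H}^{\min(t+2s,\,3/2)}(\Omega)$ by (ii), and therefore $\bar p=S(\bar u-u_d)\in\mathbb{H}^{\min(t+4s,\,3/2+2s,\,2)}(\Omega)$. Since $t+4s>1/2$, the function $\bar p$ has vanishing trace, so $a\le 0\le b$ forces $\textrm{proj}_{[a,b]}(-\mu^{-1}\bar p)=0$ on $\Gamma$ and the iteration stays consistent with the homogeneous Dirichlet condition. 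I would then use, as ingredient (iii), that $\textrm{proj}_{[a,b]}$ acts continuously on $\mathbb{H}^{\tau}(\Omega)$ for every $\tau\in[0,3/2)$ and maps functions of regularity at least $3/2$ that vanish on $\Gamma$ into $\mathbb{H}^{3/2-\varepsilon}(\Omega)$ for every $\varepsilon>0$, but in general no better. Consequently, if the exponent of $\bar p$ above is already $\ge 3/2$ — which occurs in one step as soon as $t+4s\ge 3/2$, in particular for all $s\ge 1/8$ when starting from $t=1$ — then $\bar z\in\mathbb{H}^{3/2-\varepsilon}(\Omega)$ and we are done; otherwise $\bar p\in\mathbb{H}^{t+4s}(\Omega)$ with $t+4s<3/2$, the projection preserves this, and $\bar z\in\mathbb{H}^{t+4s}(\Omega)$. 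Iterating, the regularity exponent of $\bar z$ increases by $4s$ per cycle, hence exceeds $3/2$ after at most $\lceil 1/(8s)\rceil$ cycles, and a final application of the projection yields $\bar z\in\mathbb{H}^{3/2-\varepsilon}(\Omega)$.

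The main obstacle is ingredient (iii), in particular the sharpness of the half-derivative loss above $H^{1}$: writing $\nabla\bigl(\textrm{proj}_{[a,b]}(v)\bigr)=\chi_{\{a<v<b\}}\nabla v$, the statement reduces to the boundedness on $H^{\sigma}(\Omega)$ for $\sigma<1/2$ of multiplication by the characteristic function of the coincidence set, which requires some control on the regularity of the level sets $\{-\mu^{-1}\bar p=a\}$ and $\{-\mu^{-1}\bar p=b\}$; I would import this from the regularity theory for obstacle-type problems rather than reprove it, and would take care that the identifications of $\mathbb{H}^{\tau}(\Omega)$ with the usual fractional Sobolev spaces used in the iteration (clean for $\tau\ne 3/2$, borderline at $\tau=3/2$, which is precisely the source of the $\varepsilon$) cause no difficulty.
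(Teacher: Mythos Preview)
Your proposal is correct and follows the same bootstrapping approach the paper invokes; the paper's proof consists only of the sentence ``The proof follows from the standard bootstrapping argument.'' You have in fact supplied considerably more detail than the paper, and you correctly single out the behaviour of $\textrm{proj}_{[a,b]}$ on $\mathbb{H}^{\tau}(\Omega)$ for $\tau\in(1,3/2)$ as the one genuinely nontrivial ingredient.
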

\begin{proof}
The proof follows from the standard bootstraping argument.
\end{proof}

\section{A priori error estimates}
\label{sec:error_estimates}

In this section, we analyse two finite element approximations of 
the fractional optimal control problem \eqref{eq:min_J}--\eqref{eq:control_constraints}. 
First, we investigate the variational approach \cite{Hinze}, 
where the control set is not discretized, and then move to a fully discrete scheme. 
Both techniques are based on a finite element discretization of the state 
equation \eqref{eq:state_equation} using the Balakrishnan formula \eqref{eq:balakrishnan_formula}. 
In the following subsection, we review the resulting FE error estimates, based on \cite[Sec. 4]{Bonito}.

\begin{assumption} 
Throughout this and the following sections we assume that $\Omega$ is a polygonal or polyhedral domain and that $u_{d} \in \mathbb{H}^{3/2}(\Omega)$, hence the regularity result from Lemma \ref{lem:regularity_control} holds.
\end{assumption}

\subsection{A finite element method for the state equation}
\label{subsec:fem_state_equation}

Let $\mathbb{U}(\mathcal{T}_{h})$ be the space of piecewise linear and globally continuous functions vanishing on the boundary $\partial \Omega$, defined with respect to a conforming quasi-uniform triangulation $\mathcal{T}_{h}$ of the domain $\Omega$. 
A FE approximation of problem \eqref{eq:state_equation} for $z \in L^{2}(\Omega)$ is given by
\begin{equation}
\label{eq:dunford_taylor_FE}
u_{h} = \frac{\sin{(s \pi)}}{\pi} \int_{0}^{\infty} \nu^{-s}(\nu I - \Delta_{h})^{-1} z\,d\nu
= \frac{\sin{(s \pi)}}{\pi}\int_{-\infty}^{\infty} e^{(1-s)t}\left(e^tI-\Delta_h\right)^{-1}z\,dt,
\end{equation}
where $\Delta_{h}$ denotes the discrete Laplace operator.

For $k > 0$ we define the numbers
$N_+ := \Bigg\lceil \frac{\pi^2}{4 s k^2} \Bigg\rceil$ and $N_- := \Bigg\lceil \frac{\pi^2}{4 (1-s) k^2} \Bigg\rceil$.
The sinc quadrature approximation of $u_{h}$ is then given by
\begin{equation}
\label{eq:sinc_quadrature}
u_{h}^{k} := \frac{\sin{(s \pi)}}{\pi} k \sum_{l = -N_{-}}^{N_{+}} e^{(1-s)kl}(e^{kl}I - \Delta_{h})^{-1} z.
\end{equation}

Practical aspects of the numerical implementation of this method are discussed in Section\nobreakspace \ref {sec:Implementation}.

In our problem set-up the following error estimates hold.

%% and $\Pi_{h}z$ is the $L^{2}(\Omega)$-projection of $z$ onto the space $\mathbb{U}(\mathcal{T})$.
\begin{theorem}[finite element approximation,{\cite[Theorem 4.2]{Bonito}}]
\label{thm:fe_error_dunford_talyor} 
Given $r  \in [0,1]$ with $r \leq 2s$, set $\gamma := \max{(r+2\alpha_{\star} - 2s, 0)}$ 
and $\alpha_{\star} := \frac{1}{2}(\alpha + \min{(1-r, \alpha)})$ with ${\alpha \in (0,1]}$. 
If $z \in \mathbb{H}^{\delta}(\Omega)$ for $\delta \geq \gamma$, then 
\begin{equation*}
%\label{eq:fe_error_dunford_talyor}
\norm{u - u_{h}}_{\mathbb{H}^{r}(\Omega)} \leq C_{h}\, h^{2\alpha_{\star}} \norm{z}_{\mathbb{H}^{\delta}(\Omega)}
\end{equation*}
where $C_{h} \leq c \log{(2/h)}$ if $\delta = \gamma$ and $r+2\alpha_{\star} \geq 2s$, and $C_{h} \leq c$ otherwise. 
\end{theorem}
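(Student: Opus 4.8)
The plan is to subtract the two Dunford--Taylor representations, reduce the estimate to a $\nu$-uniform finite element analysis of the shifted resolvent problem, and then integrate in $\nu$.

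\emph{Step 1 (reduction to the shifted resolvent).} Subtracting \eqref{eq:balakrishnan_formula} and \eqref{eq:dunford_taylor_FE}, and writing $w(\nu):=(\nu I-\Delta)^{-1}z$ for the solution of $-\Delta w+\nu w=z$ in $\Omega$ with $w=0$ on $\Gamma$ and $w_h(\nu)$ for its Galerkin approximation in $\mathbb{U}(\mathcal{T}_{h})$ relative to the shifted bilinear form $a_\nu(v,\phi):=(\nabla v,\nabla\phi)_{L^2(\Omega)}+\nu(v,\phi)_{L^2(\Omega)}$, one has
\begin{equation*}
u-u_h=\frac{\sin(s\pi)}{\pi}\int_0^\infty\nu^{-s}\bigl(w(\nu)-w_h(\nu)\bigr)\,d\nu,
\end{equation*}
hence $\|u-u_h\|_{\mathbb{H}^{r}(\Omega)}\le\frac{\sin(s\pi)}{\pi}\int_0^\infty\nu^{-s}\,\|w(\nu)-w_h(\nu)\|_{\mathbb{H}^{r}(\Omega)}\,d\nu$. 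Everything then reduces to a $\nu$-explicit finite element estimate for the shifted problem, followed by an integration in $\nu$.

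\emph{Step 2 (resolvent smoothing).} From the spectral expansion, $(\nu I-\Delta)^{-1}$ multiplies the $k$-th eigencomponent by $(\nu+\lambda_k)^{-1}$, and $\sup_{\lambda>0}\lambda^{2\theta}(\nu+\lambda)^{-2}\le C\min(1,\nu^{2\theta-2})$ for $\theta\in[0,1]$. This yields the smoothing estimate $\|(\nu I-\Delta)^{-1}g\|_{\mathbb{H}^{\sigma+2\theta}(\Omega)}\le C\min(1,\nu^{\theta-1})\|g\|_{\mathbb{H}^{\sigma}(\Omega)}$, which is uniform in $\nu$ and decays as $\nu\to\infty$. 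I would apply it to the primal solution $w(\nu)$, to bound $\|w(\nu)\|_{\mathbb{H}^{1+\alpha}(\Omega)}$ in terms of $\|z\|_{\mathbb{H}^{\delta}(\Omega)}$, and to the dual problem, to bound its solution in $\mathbb{H}^{1+\alpha'}(\Omega)$ with $\alpha':=\min(1-r,\alpha)$ in terms of a datum in $\mathbb{H}^{-r}(\Omega)$; a discrete counterpart of these bounds controls $w_h(\nu)$ in the regime where the shift dominates.

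\emph{Step 3 ($\nu$-explicit FE error).} For fixed $\nu\lesssim h^{-2}$ one proves $\|w(\nu)-w_h(\nu)\|_{\mathbb{H}^{r}(\Omega)}\le C\,h^{2\alpha_{\star}}\,m(\nu)\,\|z\|_{\mathbb{H}^{\delta}(\Omega)}$. The power of $h$ comes from C\'ea's lemma for $a_\nu$ together with the interpolation error of $\mathbb{U}(\mathcal{T}_{h})$ in $\mathbb{H}^{1+\alpha}(\Omega)$ for the energy part (contributing $h^{\alpha}$), and from an Aubin--Nitsche duality argument in $\mathbb{H}^{-r}(\Omega)$ for the $\mathbb{H}^{r}$-part with $r<1$ (contributing $h^{\alpha'}$); since $\alpha+\alpha'=2\alpha_{\star}$ these combine to $h^{2\alpha_{\star}}$. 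Feeding in the Step~2 smoothing bounds for the primal and dual solutions fixes $m(\nu)$ as a product of powers of $\nu$. For $\nu\gtrsim h^{-2}$ the regular bound is inefficient, and one instead uses a complementary estimate exploiting the shift-induced decay (together with inverse estimates and, where available, the regularity of $z$) so as to retain simultaneously a power of $h$ and a $\nu$-decay integrable at infinity.

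\emph{Step 4 (integration and the borderline case).} Inserting these into $\int_0^\infty\nu^{-s}\|w(\nu)-w_h(\nu)\|_{\mathbb{H}^{r}(\Omega)}\,d\nu$: the contribution of $\nu\lesssim1$ is harmless since $s<1$; over $1\lesssim\nu\lesssim h^{-2}$ one is left with $h^{2\alpha_{\star}}\int_1^{h^{-2}}\nu^{-s}m(\nu)\,d\nu$, whose integrand behaves up to constants like $\nu^{-1-(\delta-\gamma)/2}$, so the integral stays bounded uniformly in $h$ when $\delta>\gamma$ but contributes a factor $\log(h^{-2})\sim\log(2/h)$ exactly when $\delta=\gamma$ (and $\gamma>0$, i.e.\ $r+2\alpha_{\star}\ge2s$); and the tail $\nu\gtrsim h^{-2}$ is integrable thanks to $r\le2s$. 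This reproduces $C_h\le c\log(2/h)$ in the stated borderline configuration and $C_h\le c$ otherwise. The main obstacle is Step~3 — making the shifted-problem finite element analysis fully uniform in $\nu$, and choosing the crossover scale $\nu\sim h^{-2}$ — so that the $\nu$-exponents of $m(\nu)$ and of the large-$\nu$ bound assemble, after the $\nu$-integration, precisely into $h^{2\alpha_{\star}}$, with the logarithmic loss confined to the single borderline case in the hypothesis on $C_h$.
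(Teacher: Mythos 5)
The paper gives no proof of this theorem: it is quoted verbatim from \cite[Theorem 4.2]{Bonito}, and your outline reproduces exactly the strategy of that reference --- subtract the continuous and discrete Dunford--Taylor representations, prove a $\nu$-explicit finite element estimate for the shifted resolvent problem via C\'ea plus Aubin--Nitsche duality (yielding the split $2\alpha_{\star}=\alpha+\min(1-r,\alpha)$), and integrate in $\nu$ with the crossover at $\nu\sim h^{-2}$ producing the logarithm precisely in the borderline case $\delta=\gamma$, $r+2\alpha_\star\ge 2s$. Your sketch is consistent with the cited source (modulo the acknowledged gap that the $\nu$-uniform estimate of Step~3 is stated rather than proved, and the cosmetic point that the spectral supremum in Step~2 should be taken over $\lambda\ge\lambda_1>0$ rather than $\lambda>0$).
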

Note, that we get a convergence rate of $h^{2 - r}$ if we set $\alpha = 1$ and if $z$ is regular enough. However, in order to obtain the convergence rates depending on $s \in (0,1)$ and on the regularity of $z$, we have to choose $\alpha$ in Theorem \ref{thm:fe_error_dunford_talyor} appropriately.
For $r=0$ and $r=s$ respectively in Theorem \ref{thm:fe_error_dunford_talyor}, we conclude the following error estimates.
\begin{corollary}
%\label{cor:fe_error_l2_hs}
For $z \in \mathbb{H}^{\delta + \varepsilon'}(\Omega)$ with $\varepsilon' > 0$ arbitrary small and $\delta \geq - \varepsilon'$ there holds
\begin{equation}
\label{eq:fe_error_l2_hs}
\begin{aligned}
\norm{u - u_{h}}_{L^{2}(\Omega)} &\leq c \, h^{\min{(2, \delta + 2s)}} \norm{z}_{\mathbb{H}^{\delta + \varepsilon'}(\Omega)},\\
\norm{u - u_{h}}_{\mathbb{H}^{s}(\Omega)} &\leq c \, h^{\min{(2-s, \delta + s)}} \norm{z}_{\mathbb{H}^{\delta + \varepsilon'}(\Omega)}.
\end{aligned}
\end{equation}
\end{corollary}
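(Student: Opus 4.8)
The plan is to deduce both inequalities directly from Theorem~\ref{thm:fe_error_dunford_talyor}, choosing $r$ and the free parameter $\alpha\in(0,1]$ so that the exponent $2\alpha_{\star}$ produced by the theorem matches the claimed rate, and exploiting the extra regularity index $\varepsilon'>0$ precisely to push the regularity requirement $\delta_{\mathrm{thm}}\ge\gamma$ into the \emph{strict} regime $\delta_{\mathrm{thm}}>\gamma$ (or to make the auxiliary condition $r+2\alpha_{\star}\ge 2s$ fail), which in either case forces $C_{h}\le c$ rather than $C_{h}\le c\log(2/h)$. Throughout I would take $\varepsilon'$ small enough that $\varepsilon'<s$, which together with $\delta\ge-\varepsilon'$ guarantees positivity of all the choices of $\alpha$ below.

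For the $L^{2}$-estimate I take $r=0$. Then $\min(1-r,\alpha)=\alpha$, so $\alpha_{\star}=\alpha$ and $\gamma=\max(2\alpha-2s,0)$. If $\delta+2s\ge 2$ I set $\alpha=1$: this gives the rate $h^{2\alpha_{\star}}=h^{2}$ and $\gamma=2-2s$, so the hypothesis reads $z\in\mathbb{H}^{\delta+\varepsilon'}(\Omega)$ with $\delta+\varepsilon'>2-2s=\gamma$, whence $C_{h}\le c$. If $\delta+2s<2$ I set $\alpha=s+\delta/2$; this lies in $(0,1)$ since $\delta+2s<2$ and, using $\delta\ge-\varepsilon'$ with $\varepsilon'<s<2s$, also $\delta+2s>0$. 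Then $2\alpha_{\star}=\delta+2s$ is the desired exponent and $\gamma=\max(\delta,0)$, and $\delta+\varepsilon'\ge\max(\delta,0)$ always holds under $\delta\ge-\varepsilon'$. A short inspection of the two conditions defining $C_{h}$ shows that in every sub-case at least one of them fails as soon as $\varepsilon'>0$, so $C_{h}\le c$. Combining the two cases gives the exponent $\min(2,\delta+2s)$ and the constant $c$ in front of $\norm{z}_{\mathbb{H}^{\delta+\varepsilon'}(\Omega)}$.

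For the $\mathbb{H}^{s}$-estimate I take $r=s$, which is admissible since $r\le 2s$. Now $\min(1-r,\alpha)=\min(1-s,\alpha)$, so $\alpha_{\star}=\tfrac12\bigl(\alpha+\min(1-s,\alpha)\bigr)$; as $\alpha$ runs over $(0,1]$ this runs continuously and monotonically over $(0,1-s/2]$. If $\delta+s\ge 2-s$ I take $\alpha=1$, giving $\alpha_{\star}=1-s/2$, rate $h^{2-s}$, and $\gamma=\max(r+2\alpha_{\star}-2s,0)=2-2s<\delta+\varepsilon'$, so $C_{h}\le c$. If $\delta+s<2-s$ then $(\delta+s)/2\in(0,1-s/2]$, so I choose $\alpha\in(0,1]$ with $\alpha_{\star}=(\delta+s)/2$ (explicitly $\alpha=(\delta+s)/2$ when $\delta\le 2-3s$, and $\alpha=\delta+2s-1$ otherwise; positivity uses $\delta\ge-\varepsilon'$, $\varepsilon'<s$). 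This gives rate $h^{\delta+s}$ and $\gamma=\max(r+2\alpha_{\star}-2s,0)=\max(\delta,0)\le\delta+\varepsilon'$, and once more the $\varepsilon'$ prevents the logarithm. Together this yields the exponent $\min(2-s,\delta+s)$.

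I expect no genuine obstacle here: the entire argument is bookkeeping with the quantities $\alpha_{\star}$, $\gamma$ and the two-case definition of $C_{h}$ in Theorem~\ref{thm:fe_error_dunford_talyor}. The only point requiring care is checking, in each regime of $\delta$ (in particular the borderline values $\delta=-\varepsilon'$ and $\delta=0$), that the chosen $\alpha$ stays in $(0,1]$ and that either $\delta+\varepsilon'>\gamma$ or the condition $r+2\alpha_{\star}\ge 2s$ fails, so that the logarithmic factor is indeed removed; this is precisely what the $\varepsilon'$ in the hypothesis buys us.
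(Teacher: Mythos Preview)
Your proposal is correct and follows exactly the route the paper indicates: the corollary is obtained by specializing Theorem~\ref{thm:fe_error_dunford_talyor} to $r=0$ and $r=s$, and your careful choice of $\alpha$ in each regime of $\delta$ together with the use of the extra $\varepsilon'$ to avoid the logarithmic factor fills in precisely the bookkeeping that the paper leaves implicit. There is no meaningful difference in approach.
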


The quadrature formula \eqref{eq:sinc_quadrature} possesses the following approximation property.
\begin{theorem}[sinc quadrature approximation, {\cite[Theorem 4.3]{2018_BonitoLeiPasciak}}]
%\label{thm:sinc_quadrature_error}
For $r \in [0,1]$ and $z \in \mathbb{H}^{r}(\Omega)$ there holds
\begin{equation*}
\norm{u_{h} - u_{h}^{k}}_{\mathbb{H}^{r}(\Omega)} 
\leq c \, e^{-\pi^{2}/(2k)} \norm{z}_{\mathbb{H}^{\max(0,r-2s+\epsilon)}(\Omega)}
\leq c \, e^{-\pi^{2}/(2k)} \norm{z}_{\mathbb{H}^{r}(\Omega)}
\end{equation*}
\end{theorem}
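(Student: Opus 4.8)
The plan is to pass to the spectral decomposition of the discrete Laplacian, so that the operator estimate decouples into a family of scalar quadrature bounds that are uniform in the eigenvalue. Let $\{(\lambda_{h,j},\varphi_{h,j})\}_j$ be the $L^2(\Omega)$-orthonormal eigenpairs of $-\Delta_h$ and set $z_j := (z,\varphi_{h,j})_{L^2(\Omega)}$. The resolvents $(e^{kl}I-\Delta_h)^{-1}$ and the integral in \eqref{eq:dunford_taylor_FE} act diagonally in this basis, and the Balakrishnan identity gives $\tfrac{\sin(s\pi)}{\pi}\int_{-\infty}^{\infty} g_\lambda(t)\,dt = \lambda^{-s}$ with $g_\lambda(t) := e^{(1-s)t}/(e^t+\lambda)$, so that
\[
u_h - u_h^k = \frac{\sin(s\pi)}{\pi}\sum_j E_k(\lambda_{h,j})\,z_j\,\varphi_{h,j},\qquad E_k(\lambda) := \int_{-\infty}^{\infty} g_\lambda(t)\,dt - k\sum_{l=-N_-}^{N_+} g_\lambda(kl).
\]
With the standard norm equivalence $\norm{w}_{\mathbb{H}^r(\Omega)}\simeq\norm{(-\Delta_h)^{r/2}w}_{L^2(\Omega)}$ on the finite element space for $r\in[0,1]$ and the $\mathbb{H}^\rho$-stability of the $L^2$-projection, $\rho := \max(0,r-2s+\epsilon)$, it then suffices to prove
\[
|E_k(\lambda)|\le c\,e^{-\pi^2/(2k)}\,\lambda^{-\sigma},\qquad \sigma := \tfrac12(r-\rho) = \tfrac12\min(r,\,2s-\epsilon),
\]
uniformly for $\lambda$ in the spectrum of $-\Delta_h$, which is bounded below by a fixed positive constant; squaring and summing over $j$ yields the first inequality, and the second follows from $\mathbb{H}^r(\Omega)\hookrightarrow\mathbb{H}^\rho(\Omega)$.

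For the scalar bound I would split $E_k = E_k^{\mathrm{qr}} + E_k^{\mathrm{tr}}$ into the error $E_k^{\mathrm{qr}}$ of the untruncated trapezoidal rule $k\sum_{l\in\mathbb{Z}} g_\lambda(kl)$ and the truncation remainder $E_k^{\mathrm{tr}}$ collecting the terms with $l<-N_-$ or $l>N_+$. The first term I would treat via analyticity: $g_\lambda$ extends holomorphically to the strip $\{z : |\operatorname{Im} z|<\pi\}$, its poles lying on the lines $\operatorname{Im} z = \pm\pi$, and the classical error representation for the trapezoidal rule on $\mathbb{R}$ for functions holomorphic in a strip (via Poisson summation / contour shift, cf.\ \cite{2018_BonitoLeiPasciak}) bounds $|E_k^{\mathrm{qr}}|$ by $\tfrac{e^{-2\pi d/k}}{1-e^{-2\pi d/k}}\int_{\mathbb{R}}\bigl(|g_\lambda(x+id)|+|g_\lambda(x-id)|\bigr)\,dx$ for every $d\in(0,\pi)$. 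An elementary estimate of $|g_\lambda(x\pm id)| = e^{(1-s)x}/|e^x e^{\pm id}+\lambda|$, splitting the $x$-integral at $e^x\simeq\lambda$, shows this line integral is $\le C(s,d)\,\lambda^{-s}$; choosing $d\in[\pi/4,\pi)$ then gives $|E_k^{\mathrm{qr}}|\le c\,e^{-\pi^2/(2k)}\lambda^{-s}\le c'\,e^{-\pi^2/(2k)}\lambda^{-\sigma}$ on the spectrum, since $\sigma<s$ and $\lambda$ is bounded below.

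For the truncation remainder I would use the two elementary pointwise bounds $g_\lambda(t)\le e^{-st}$ and $g_\lambda(t)\le e^{(1-s)t}/\lambda$, valid for all $t\in\mathbb{R}$, and interpolate them into $g_\lambda(t)\le e^{(\theta-s)t}\lambda^{-\theta}$ for every $\theta\in[0,1]$. Taking $\theta=\sigma$ on the right tail $t>0$ (geometric decay rate $s-\sigma>0$) and the bound $g_\lambda(t)\le e^{(1-s)t}/\lambda$ on the left tail $t<0$ (decay rate $1-s$), and summing the resulting geometric series, the remainder is controlled by a constant times $\lambda^{-\sigma}\bigl(e^{-(s-\sigma)kN_+}+e^{-(1-s)kN_-}\bigr)$ on the spectrum (using $\lambda^{-1}\le C\lambda^{-\sigma}$ there). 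The numbers $N_+$ and $N_-$ are tuned to the decay rates $s$ and $1-s$ precisely so that both exponents reach the order $\pi^2/(2k)$, whence $|E_k^{\mathrm{tr}}|\le c\,e^{-\pi^2/(2k)}\lambda^{-\sigma}$; combining this with the bound for $E_k^{\mathrm{qr}}$ finishes the scalar estimate, and hence the theorem.

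The step I expect to be the main obstacle is the $\lambda$-uniformity of these estimates — in particular, keeping the geometric-series tails of the truncation genuinely $O(e^{-\pi^2/(2k)})$ after extracting the weight $\lambda^{-\sigma}$. This is precisely why the regularity shift is $2s-\epsilon$ rather than $2s$: the interpolation parameter $\theta=\sigma$ must stay strictly below $s$, and one has to check that the quadrature error dominates $\lambda^{-s}$ uniformly down to the smallest eigenvalue and up to $\lambda\simeq h^{-2}$. Carrying out this bookkeeping with all constants made explicit is the content of \cite[Theorem 4.3]{2018_BonitoLeiPasciak}.
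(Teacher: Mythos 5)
The paper does not actually prove this statement --- it imports it verbatim from \cite{2018_BonitoLeiPasciak} --- so there is no internal proof to compare against. Your sketch is, in outline, the argument of that reference: diagonalize over the eigenpairs of $-\Delta_h$, reduce to a scalar quadrature error $E_k(\lambda)$, bound the infinite trapezoidal error via analyticity of $g_\lambda$ in the strip $\abs{\operatorname{Im} t}<\pi$, and bound the truncation by the exponential decay of $g_\lambda$. The diagonalization, the discrete norm equivalence together with the $\mathbb{H}^\rho$-stability of the $L^2$-projection, and the infinite-trapezoid part are all sound: $d\geq\pi/4$ indeed gives $e^{-2\pi d/k}\leq e^{-\pi^2/(2k)}$, and the line integrals are $O(\lambda^{-s})$ uniformly.

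The gap sits exactly where you flagged ``the main obstacle'' and then asserted the conclusion anyway: with the $N_\pm$ defined in Section~\ref{subsec:fem_state_equation}, the truncation exponents do not reach $\pi^2/(2k)$. The integrand $g_\lambda(t)=e^{(1-s)t}/(e^t+\lambda)$ of \eqref{eq:dunford_taylor_FE} decays at rate $1-s$ on the left and, after your interpolation with $\theta=\sigma$, at rate $s-\sigma$ on the right. Hence with $N_-=\lceil\pi^2/(4(1-s)k^2)\rceil$ the left tail is only $O\bigl(e^{-(1-s)kN_-}\bigr)=O\bigl(e^{-\pi^2/(4k)}\bigr)$, a factor two short in the exponent, and with $N_+=\lceil\pi^2/(4sk^2)\rceil$ the right tail is $O\bigl(e^{-(s-\sigma)kN_+}\bigr)=O\bigl(e^{-\frac{s-\sigma}{s}\,\frac{\pi^2}{4k}}\bigr)$, which for $r\geq 2s-\epsilon$ (so $\sigma=s-\epsilon/2$) degenerates to $O\bigl(e^{-\frac{\epsilon}{2s}\frac{\pi^2}{4k}}\bigr)$. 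No $\epsilon$-dependent constant can repair this, since $e^{-c/k}\leq C\,e^{-\pi^2/(2k)}$ for all $k>0$ forces $c\geq\pi^2/2$. To make your argument close one must take $N_+\geq\lceil\pi^2/(2(s-\sigma)k^2)\rceil$ and $N_-\geq\lceil\pi^2/(2(1-s)k^2)\rceil$; the constants $\pi^2/(4sk^2)$, $\pi^2/(4(1-s)k^2)$ are the natural ones for a representation whose integrand decays at the doubled rates $2s$ and $2(1-s)$ (substitution $\nu=e^{2y}$, as in the cited reference), not for the rates of the integrand written in \eqref{eq:dunford_taylor_FE} and \eqref{eq:sinc_quadrature}. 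The discrepancy is immaterial for the rest of the paper, because any $e^{-c/k}$ is negligible against the finite element error once $k\in\mathcal{O}(\abs{\ln h}^{-1})$; but as a proof of the theorem as stated, your truncation bookkeeping does not verify the claimed rate.
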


Hence, if we choose $k$ appropriately, we can balance the sinc quadrature and the finite element errors.
\begin{lemma}
\label{lem:fe_approximation_error}
Assume that the number of integration points in the
 sinc quadrature~\textup {(\ref {eq:sinc_quadrature})} is balanced with the FE errors
 \textup {(\ref {eq:fe_error_l2_hs})}, i.e., $k \in \mathcal O(\big|\ln{h}\big|^{-1})$. For
 $z \in \mathbb{H}^{\delta+\varepsilon'}(\Omega)$ with $\varepsilon' > 0$ and $\delta \geq - \varepsilon'$ we obtain
\begin{equation}
\label{eq:fe_approximation_error}
\begin{aligned}
\norm{u - u_{h}^{k}}_{L^{2}(\Omega)} &\leq c \, h^{\min{(2, \delta+2s)}} \norm{z}_{\mathbb{H}^{\delta+\varepsilon'}(\Omega)},\\
\norm{u - u_{h}^{k}}_{\mathbb{H}^{s}(\Omega)} &\leq c \, h^{\min{(2-s, \delta+s)}} \norm{z}_{\mathbb{H}^{\delta+\varepsilon'}(\Omega)}.
\end{aligned}
\end{equation}
\end{lemma}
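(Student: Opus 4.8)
The plan is to bound the total error by the triangle inequality
\begin{equation*}
\norm{u - u_h^k}_{\mathbb{H}^r(\Omega)} \le \norm{u - u_h}_{\mathbb{H}^r(\Omega)} + \norm{u_h - u_h^k}_{\mathbb{H}^r(\Omega)}, \qquad r \in \{0,s\},
\end{equation*}
and to estimate the two contributions separately from the results already collected. For the first, purely FE-induced term I would simply invoke the corollary of Theorem~\ref{thm:fe_error_dunford_talyor}, i.e.\ the bounds \eqref{eq:fe_error_l2_hs}, which under the hypotheses $z\in\mathbb{H}^{\delta+\varepsilon'}(\Omega)$ and $\delta\ge-\varepsilon'$ already deliver exactly the right-hand sides asserted in \eqref{eq:fe_approximation_error}.

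The substance of the argument is to show that the sinc quadrature contribution is at worst of the same order once $k$ is coupled to $h$. I would set $k = c_0\,\big|\ln h\big|^{-1}$ (this is the meaning of $k\in\mathcal O(\big|\ln h\big|^{-1})$) with a constant $c_0>0$ still at my disposal, and observe that for $0<h<1$
\begin{equation*}
e^{-\pi^2/(2k)} = e^{-(\pi^2/(2c_0))\,\big|\ln h\big|} = h^{\pi^2/(2c_0)}.
\end{equation*}
Plugging this into the sinc quadrature approximation theorem and using its sharper form, which controls the error by $\norm{z}_{\mathbb{H}^{\max(0,r-2s+\epsilon)}(\Omega)}$, I would choose the auxiliary parameter $\epsilon<s$ so that $\max(0,r-2s+\epsilon)=0$ for both $r=0$ and $r=s$; hence the quadrature error is bounded by $c\,h^{\pi^2/(2c_0)}\norm{z}_{L^2(\Omega)} \le c\,h^{\pi^2/(2c_0)}\norm{z}_{\mathbb{H}^{\delta+\varepsilon'}(\Omega)}$, the last step using $\delta+\varepsilon'\ge0$. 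Finally I would fix $c_0$ small enough that $\pi^2/(2c_0)\ge 2$; then, since $h<1$, $h^{\pi^2/(2c_0)}\le h^{\min(2,\delta+2s)}$ and likewise $h^{\pi^2/(2c_0)}\le h^{\min(2-s,\delta+s)}$ (note $\min(2-s,\delta+s)\le 2-s<2$), so the quadrature term is dominated by the FE term. Adding the two bounds gives \eqref{eq:fe_approximation_error}.

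There is no deep obstacle here; the only thing requiring care is the bookkeeping of the auxiliary parameters. One must fix the hidden constant $c_0$ in $k\in\mathcal O(\big|\ln h\big|^{-1})$ uniformly in $s\in(0,1)$ and in $\delta$ so that the exponentially small quadrature error becomes an algebraic rate at least matching the FE order, and one must keep $\varepsilon'$ (from \eqref{eq:fe_error_l2_hs}) and $\epsilon$ (from the quadrature estimate) mutually consistent. I would also remark in passing that this choice of $k$ makes $N_\pm=\mathcal O(\big|\ln h\big|^2)$, so that the number of quadrature nodes, and hence of discrete resolvent solves, grows only polylogarithmically in $1/h$.
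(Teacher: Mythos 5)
Your argument is correct and is exactly the balancing argument the paper intends (the paper gives no written proof, only the remark that one "can balance the sinc quadrature and the finite element errors" by choosing $k$ appropriately): triangle inequality, the FE bounds \eqref{eq:fe_error_l2_hs} for $u-u_h$, and the sinc quadrature theorem with $k=c_0\,\abs{\ln h}^{-1}$ turning $e^{-\pi^2/(2k)}$ into $h^{\pi^2/(2c_0)}$, which dominates the algebraic FE rate for $c_0$ small. Your use of the sharper form of the quadrature bound with $\epsilon<s$, so that only $\norm{z}_{L^2(\Omega)}$ is needed, is the right way to keep the hypotheses consistent with $z\in\mathbb{H}^{\delta+\varepsilon'}(\Omega)$ only.
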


Given the regularity results of ~Lemma\nobreakspace \ref {lem:regularity_control} for $u_{d} \in \mathbb{H}^{3/2}(\Omega)$ we conclude the following error estimates.
\begin{corollary}
\label{cor:fe_error_estimates_regular}
For $z \in \mathbb{H}^{3/2-\varepsilon}(\Omega)$ there holds
\begin{equation*}
%\label{eq:fe_error_l2}
\norm{u - u_{h}^{k}}_{L^{2}(\Omega)} \leq c \, h^{\min{(2, 3/2+2s-\varepsilon')}} \norm{z}_{\mathbb{H}^{3/2-\varepsilon}(\Omega)}
\end{equation*} 
and
\begin{equation*}
%\label{eq:fe_error_hs}
\norm{u - u_{h}^{k}}_{\mathbb{H}^{s}(\Omega)} \leq c \, h^{\min{(2-s, 3/2 + s - \varepsilon')}} \norm{z}_{\mathbb{H}^{3/2-\varepsilon}(\Omega)}
\end{equation*}
with $\varepsilon' > 0$ and $\varepsilon > 0$ arbitrary small and $\varepsilon < \varepsilon'$. Note, that the approximation $u^k_{h}$ converges quadratically in the
$L^{2}(\Omega)$-norm, provided that $s > \frac{1}{4}$.
\end{corollary}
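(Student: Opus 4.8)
The plan is to read the statement off from Lemma~\ref{lem:fe_approximation_error} by a single substitution; essentially no new argument is required, and the only point needing attention is the bookkeeping of the two independent, arbitrarily small exponents. As a preliminary observation I would note that the hypothesis $z\in\mathbb{H}^{3/2-\varepsilon}(\Omega)$ is exactly the regularity granted by Lemma~\ref{lem:regularity_control} under the standing assumption $u_{d}\in\mathbb{H}^{3/2}(\Omega)$, so this corollary is the form of Lemma~\ref{lem:fe_approximation_error} that will actually be applied later to the optimal control.

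Concretely, given the fixed $\varepsilon>0$ I would introduce an auxiliary parameter $\eta>0$ and set $\delta:=3/2-\varepsilon-\eta$, so that $\delta+\eta=3/2-\varepsilon$ and hence $z\in\mathbb{H}^{\delta+\eta}(\Omega)$ with $\delta\ge-\eta$ (as $\delta$ is close to $3/2$). Then I would invoke Lemma~\ref{lem:fe_approximation_error}, taking its small parameter to be $\eta$ and using this $\delta$, which yields
\begin{equation*}
\norm{u-u_{h}^{k}}_{L^{2}(\Omega)}\le c\,h^{\min(2,\,\delta+2s)}\norm{z}_{\mathbb{H}^{3/2-\varepsilon}(\Omega)},\qquad
\norm{u-u_{h}^{k}}_{\mathbb{H}^{s}(\Omega)}\le c\,h^{\min(2-s,\,\delta+s)}\norm{z}_{\mathbb{H}^{3/2-\varepsilon}(\Omega)}.
\end{equation*}
Setting $\varepsilon':=\varepsilon+\eta$ gives $\delta+2s=3/2+2s-\varepsilon'$ and $\delta+s=3/2+s-\varepsilon'$, which are precisely the exponents claimed; moreover $\varepsilon'>\varepsilon$ and $\varepsilon'$ is still arbitrarily small, which is exactly the constraint $\varepsilon<\varepsilon'$ appearing in the statement.

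For the concluding remark I would simply observe that $\min(2,\,3/2+2s-\varepsilon')=2$ as soon as $3/2+2s-\varepsilon'\ge2$, i.e.\ $s\ge 1/4+\varepsilon'/2$; since $\varepsilon'$ may be taken as small as we wish, this is the case whenever $s>1/4$, which gives the quadratic convergence in the $L^{2}(\Omega)$-norm. I do not expect any genuine obstacle here: the result is a specialization of Lemma~\ref{lem:fe_approximation_error}, and the only subtlety is to keep the two small parameters apart and then collapse them into a single $\varepsilon'$ without claiming a convergence rate better than the lemma actually supplies.
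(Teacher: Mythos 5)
Your proposal is correct and is exactly the argument the paper intends: the corollary is stated without proof as a direct specialization of Lemma~\ref{lem:fe_approximation_error}, and your choice $\delta = 3/2-\varepsilon-\eta$ with $\varepsilon' = \varepsilon+\eta$ is the right bookkeeping to reconcile the two small parameters and obtain the stated exponents with $\varepsilon < \varepsilon'$. The observation on quadratic $L^2$-convergence for $s>1/4$ is likewise handled correctly.
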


In the following we drop the superscript $k$ and write $z_h$, $u_h$, and $p_h$ for the discrete approximations of $z$, $u$ and $p$.

%%%%%%%%%%%%%%%%%%%%%%%%%%%%%%%%%%%%%%%%%%%%%%%%%%%%%%%%%%%%%%%%%%%%%
%%%%%%%%%%%%%%%%%%%%%%%%%%%%%%%%%%%%%%%%%%%%%%%%%%%%%%%%%%%%%%%%%%%%%
%%%%%%%%%%%%%%%%%%%%%%%%%%%%%%%%%%%%%%%%%%%%%%%%%%%%%%%%%%%%%%%%%%%%%
%%%%%%%%%%%%%%%%%%%%%%%%%%%%%%%%%%%%%%%%%%%%%%%%%%%%%%%%%%%%%%%%%%%%%

\subsection{Variational discretization}
\label{subsec:semidiscrete_scheme}

We define the variational discretization of the optimal control problem
\eqref{eq:min_J}--\eqref{eq:control_constraints}
as finding $\bar{z}_h \in Z_{\text{ad}}$ such that
\begin{align}\label{eq:var_dis}
\bar{z}_h := \argmin_{z \in Z_{\text{ad}}}J_h(z) = \argmin_{z \in Z_{\text{ad}}}\frac{1}{2} \lVert S_h z -u_d \rVert_{L^2{(\Omega})}^{2} + \frac{\mu}{2}\lVert z\rVert_{L^2{(\Omega})}^{2}.
\end{align}

Similarly, as in the continuous setting, the discrete optimal control problem \eqref{eq:var_dis} has a unique solution $\bar{z}_h \in Z_{\text{ad}}$.
We denote by $\bar{u}_h:= S_h \bar{z}_h$  the optimal discrete state and by 
$\bar{p}_h := S_h(S_h \bar{z}_h-u_d)$ the optimal discrete adjoint state. In this case the variational inequality reads as
\begin{equation}\label{eq:var_ineq_var}
 (\bar{p}_h + \mu \bar{z}_h,z-\bar{z}_h)_{L^2(\Omega)} \geq 0\quad \forall z \in Z_{ad},
\end{equation}
which implies 
 \begin{equation}
 \label{eq:disc_proj}
  \bar{z}_h = \textrm{proj}_{[a,b]}\left( -\frac{1}{\mu}\bar{p}_h \right).
 \end{equation}
 Here and in the following, we denote by $S_h$ the discrete, self-adjoint solution operator defined by \textup {(\ref {eq:sinc_quadrature})}.
 
\begin{lemma}\label{lemma:stab}
The following stability estimates hold
\begin{align}
\lVert Sv \rVert_{L^2(\Omega)} &\leq c \lVert v \rVert_{L^2(\Omega)}, \quad \lVert S_h v \rVert_{L^2(\Omega)} \leq c \lVert v \rVert_{L^2(\Omega)}, \quad \lVert S_h v \rVert_{\mathbb{H}^s(\Omega)} \leq c \lVert v \rVert_{L^2(\Omega)}.
  \label{eq:Sv}
\end{align}
\end{lemma}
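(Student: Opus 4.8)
The plan is to read off the first (continuous) bound from the spectral representation of $S$, and then to derive the two discrete bounds from it by combining it with the a priori error estimate of Lemma~\ref{lem:fe_approximation_error} via the triangle inequality.

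First I would expand $v=\sum_{k=1}^{\infty}v_k\varphi_k$ with $v_k=(v,\varphi_k)_{L^2(\Omega)}$, so that $Sv=\sum_{k=1}^{\infty}\lambda_k^{-s}v_k\varphi_k$. Using $0<\lambda_1\le\lambda_2\le\cdots$ together with the identity $\norm{w}_{\mathbb{H}^t(\Omega)}^2=\sum_k\lambda_k^t w_k^2$ valid for $t\in[0,1)$, this gives
\begin{equation*}
\norm{Sv}_{L^2(\Omega)}^2=\sum_{k=1}^{\infty}\lambda_k^{-2s}v_k^2\le\lambda_1^{-2s}\norm{v}_{L^2(\Omega)}^2,\qquad \norm{Sv}_{\mathbb{H}^s(\Omega)}^2=\sum_{k=1}^{\infty}\lambda_k^{-s}v_k^2\le\lambda_1^{-s}\norm{v}_{L^2(\Omega)}^2,
\end{equation*}
that is, the first estimate in \eqref{eq:Sv} together with the auxiliary bound $\norm{Sv}_{\mathbb{H}^s(\Omega)}\le c\norm{v}_{L^2(\Omega)}$, which I will use below.

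Next I would fix $\varepsilon'\in(0,s)$ and apply Lemma~\ref{lem:fe_approximation_error} with $\delta=-\varepsilon'$. Then $\mathbb{H}^{\delta+\varepsilon'}(\Omega)=L^2(\Omega)$, and the exponents $\min(2,\delta+2s)=\min(2,2s-\varepsilon')$ and $\min(2-s,\delta+s)=\min(2-s,s-\varepsilon')$ are strictly positive, hence (as $h$ is bounded above) $h^{\min(2,2s-\varepsilon')}\le c$ and $h^{\min(2-s,s-\varepsilon')}\le c$. Therefore \eqref{eq:fe_approximation_error} gives $\norm{Sv-S_h v}_{L^2(\Omega)}\le c\norm{v}_{L^2(\Omega)}$ and $\norm{Sv-S_h v}_{\mathbb{H}^s(\Omega)}\le c\norm{v}_{L^2(\Omega)}$, the latter being meaningful because $S_h v=u_h^k\in\mathbb{U}(\mathcal{T}_h)\subset H^1_0(\Omega)\subset\mathbb{H}^s(\Omega)$. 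A triangle inequality, $\norm{S_h v}_{L^2(\Omega)}\le\norm{Sv}_{L^2(\Omega)}+\norm{Sv-S_h v}_{L^2(\Omega)}$ and likewise in $\mathbb{H}^s(\Omega)$, then yields the remaining two estimates of \eqref{eq:Sv}.

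The computation is routine; the only point that needs attention is that $\varepsilon'$ must be chosen strictly below $s$, so that the powers of $h$ appearing in \eqref{eq:fe_approximation_error} stay nonnegative and thus bounded as $h\to0$ — otherwise the error estimate would be useless as a stability bound. If one prefers a derivation that does not invoke the error estimates, one can instead diagonalize $-\Delta_h$ and observe that $S_h$ acts on its $j$-th eigenmode by multiplication with $q_k(\lambda_{h,j})$, where $q_k(\lambda):=\tfrac{\sin(s\pi)}{\pi}\,k\sum_{l=-N_-}^{N_+}e^{(1-s)kl}(e^{kl}+\lambda)^{-1}$; a uniform bound $0\le q_k(\lambda)\le c$ for $\lambda\ge\lambda_{h,1}$ then gives $L^2$-stability at once, while the $\mathbb{H}^s$-bound would still need an interpolation step, so the route through Lemma~\ref{lem:fe_approximation_error} is the most economical.
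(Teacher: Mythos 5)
Your proposal is correct and follows essentially the same route as the paper: the continuous bound comes from the $2s$-shift of $S$ (which you make explicit via the eigenfunction expansion, also yielding the auxiliary $\mathbb{H}^s$-stability of $S$ that the paper invokes for the third estimate), and the two discrete bounds are obtained exactly as in the paper by inserting $Sv$, applying the triangle inequality, and using the a priori estimate \eqref{eq:fe_approximation_error} with $\delta=-\varepsilon'$. Your observation that $\varepsilon'$ must be taken smaller than $s$ so that the exponents of $h$ remain nonnegative is a worthwhile point that the paper leaves implicit.
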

\begin{proof}
The first estimate follows from a trivial embedding and the $2s$-shift of the fractional Laplace operator. 
\[
\lVert Sv \rVert_{L^2(\Omega)} \leq c \lVert Sv \rVert_{\mathbb{H}^{2s}(\Omega)} \leq c \lVert v \rVert_{L^2(\Omega)} .
\]
To prove the second estimate we introduce the intermediate function $Sv$ to obtain
\[
\norm{S_{h}v}_{L^{2}(\Omega)} \leq \lVert (S_h - S) v \rVert_{L^2(\Omega)} + \lVert Sv \rVert_{L^2(\Omega)},
\]
and apply  the a priori error estimate \eqref{eq:fe_approximation_error} 
with $\delta = -\varepsilon'$ 
as well as the first estimate in \eqref{eq:Sv}. 
The proof of the third estimate follows the same path, using the stability of the operator $S: L^{2}(\Omega) \rightarrow \mathbb{H}^{s}(\Omega)$.
\end{proof}
 
\begin{theorem}
Let the pairs $(\bar{u}(\bar{z}),\bar{z})$ and $(\bar{u}_h(\bar{z}_h),\bar{z}_h)$ be the solutions to problems \eqref{eq:min_J} and \eqref{eq:var_dis}, respectively. Then the estimates
\begin{align}\label{eq:var_a_priori_z}
\lVert \bar{z} - \bar{z}_h \rVert_{L^2(\Omega)} &\leq c \, h^{\min(2,3/2+2s -\varepsilon')} \left(\lVert \bar{z} \rVert_{\mathbb{H}^{3/2-\varepsilon}(\Omega)} + \lVert u_d \rVert_{\mathbb{H}^{3/2-\varepsilon}(\Omega)} \right),\\\label{eq:var_a_priori_u}
\lVert \bar{u} - \bar{u}_h \rVert_{L^2(\Omega)} &\leq c \, h^{\min(2,3/2+2s -\varepsilon')} \left(\lVert \bar{z} \rVert_{\mathbb{H}^{3/2-\varepsilon}(\Omega)} + \lVert u_d \rVert_{\mathbb{H}^{3/2-\varepsilon}(\Omega)} \right),\\\label{eq:var_a_priori_u_s}
\lVert \bar{u} - \bar{u}_h \rVert_{\mathbb{H}^s(\Omega)} &\leq c \, h^{\min(2-s,3/2+s -\varepsilon')} \left(\lVert \bar{z} \rVert_{\mathbb{H}^{3/2-\varepsilon}(\Omega)} + \lVert u_d \rVert_{\mathbb{H}^{3/2-\varepsilon}(\Omega)} \right)
\end{align}
hold, provided $\varepsilon < \varepsilon'$.
\end{theorem}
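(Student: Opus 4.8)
The plan is to run the standard perturbation argument for variational discretizations, combining the continuous and discrete variational inequalities \eqref{eq:variational_inequality} and \eqref{eq:var_ineq_var} with the state-equation error estimates of Corollary~\ref{cor:fe_error_estimates_regular}, and then to transfer the control estimate to the state by the triangle inequality and the stability bounds of Lemma~\ref{lemma:stab}.

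\textbf{Step 1 (control estimate, reduction).} First I would test \eqref{eq:variational_inequality} with $z=\bar z_h\in Z_{ad}$ and \eqref{eq:var_ineq_var} with $z=\bar z\in Z_{ad}$ and add the resulting inequalities; recalling $\bar p=S(S\bar z-u_d)$ and $\bar p_h=S_h(S_h\bar z_h-u_d)$, this gives
\begin{equation*}
\mu\,\lVert\bar z-\bar z_h\rVert_{L^2(\Omega)}^2 \le (\bar p-\bar p_h,\,\bar z_h-\bar z)_{L^2(\Omega)}.
\end{equation*}
I then introduce the auxiliary adjoint $\bar p_h(\bar z):=S_h(S_h\bar z-u_d)$ and split $\bar p-\bar p_h=\bigl(\bar p-\bar p_h(\bar z)\bigr)+\bigl(\bar p_h(\bar z)-\bar p_h\bigr)$. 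Since $\bar p_h(\bar z)-\bar p_h=S_hS_h(\bar z-\bar z_h)$ and $S_h$ is self-adjoint, the second contribution equals $-\lVert S_h(\bar z-\bar z_h)\rVert_{L^2(\Omega)}^2\le 0$ and may be dropped; a Cauchy--Schwarz estimate on the first then yields $\lVert\bar z-\bar z_h\rVert_{L^2(\Omega)}\le \mu^{-1}\lVert\bar p-\bar p_h(\bar z)\rVert_{L^2(\Omega)}$.

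\textbf{Step 2 (bounding $\bar p-\bar p_h(\bar z)$).} I write $\bar p-\bar p_h(\bar z)=(S-S_h)(S\bar z-u_d)+S_h(S-S_h)\bar z$. For the first summand I invoke Corollary~\ref{cor:fe_error_estimates_regular}: by Lemma~\ref{lem:regularity_control} we have $\bar z\in\mathbb H^{3/2-\varepsilon}(\Omega)$, hence $S\bar z\in\mathbb H^{\min(2,3/2-\varepsilon+2s)}(\Omega)\hookrightarrow\mathbb H^{3/2-\varepsilon}(\Omega)$ and therefore $S\bar z-u_d\in\mathbb H^{3/2-\varepsilon}(\Omega)$ with norm bounded by $\lVert\bar z\rVert_{\mathbb H^{3/2-\varepsilon}(\Omega)}+\lVert u_d\rVert_{\mathbb H^{3/2-\varepsilon}(\Omega)}$; this produces the factor $h^{\min(2,3/2+2s-\varepsilon')}$. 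For the second summand I use the $L^2$-stability $\lVert S_hv\rVert_{L^2(\Omega)}\le c\lVert v\rVert_{L^2(\Omega)}$ from Lemma~\ref{lemma:stab} and then Corollary~\ref{cor:fe_error_estimates_regular} applied to $\bar z$ itself, again with rate $h^{\min(2,3/2+2s-\varepsilon')}$. Combining gives \eqref{eq:var_a_priori_z}.

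\textbf{Step 3 (state estimates).} I split $\bar u-\bar u_h=S\bar z-S_h\bar z_h=(S-S_h)\bar z+S_h(\bar z-\bar z_h)$. The term $(S-S_h)\bar z$ is $O\!\bigl(h^{\min(2,3/2+2s-\varepsilon')}\bigr)$ in $L^2(\Omega)$ and $O\!\bigl(h^{\min(2-s,3/2+s-\varepsilon')}\bigr)$ in $\mathbb H^s(\Omega)$ by Corollary~\ref{cor:fe_error_estimates_regular}; the term $S_h(\bar z-\bar z_h)$ is controlled by $\lVert S_hv\rVert_{L^2(\Omega)}\le c\lVert v\rVert_{L^2(\Omega)}$ resp.\ $\lVert S_hv\rVert_{\mathbb H^s(\Omega)}\le c\lVert v\rVert_{L^2(\Omega)}$ from Lemma~\ref{lemma:stab} together with Step~1, and since $\min(2,3/2+2s-\varepsilon')\ge\min(2-s,3/2+s-\varepsilon')$ it is dominated by the first term for $h<1$. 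This yields \eqref{eq:var_a_priori_u} and \eqref{eq:var_a_priori_u_s}. The one delicate point is the sign bookkeeping in Step~1: forcing $(\bar p_h(\bar z)-\bar p_h,\bar z_h-\bar z)_{L^2(\Omega)}$ to have the right sign is exactly what dictates the use of the auxiliary adjoint and of the self-adjointness and positive semidefiniteness of $S_h$; one also has to verify that the intermediate datum $S\bar z-u_d$ sits in $\mathbb H^{3/2-\varepsilon}(\Omega)$ — and not merely in some space $\mathbb H^{\gamma}(\Omega)$ with $\gamma$ too small — so that Corollary~\ref{cor:fe_error_estimates_regular} delivers the full rate rather than a reduced one.
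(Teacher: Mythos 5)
Your proposal is correct and follows essentially the same path as the paper: the same combination of the two variational inequalities, the same non-positive term $\bigl(S_h^2(\bar z-\bar z_h),\bar z_h-\bar z\bigr)$ handled via self-adjointness of $S_h$, and the same use of the state-equation estimates and Lemma~\ref{lemma:stab} for the transfer to the state. The only difference is cosmetic: you group $(S-S_h)S\bar z$ and $(S_h-S)u_d$ into the single term $(S-S_h)(S\bar z-u_d)$ (justified by your regularity check on $S\bar z-u_d$), whereas the paper estimates these two contributions separately.
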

\begin{proof}
We begin by showing the first estimate.
The proof is similar to the proof of \cite[Theorem 5.10]{Antil} based on ideas introduced in \cite{Hinze}.
Testing variational inequalities \eqref{eq:variational_inequality} and \eqref{eq:var_ineq_var} with $\bar{z}_h \in Z_{\text{ad}}$ and $\bar{z} \in Z_{\text{ad}}$, respectively, and adding both expressions, we arrive at
%\begin{equation}\label{eq:add_var}
%\mu \lVert \bar{z}-\bar{z}_h \rVert_{L^2(\Omega)}^2 \leq ( \bar{p}-\bar{p}_h, \bar{z}_h-\bar{z}).
%\end{equation}
%Introducing two additional terms $S_h^2 \bar{z}$ and $SS_h\bar{z}$, we can rewrite \eqref{eq:add_var} as follows
\begin{align}
\mu \lVert \bar{z}-\bar{z}_h \rVert_{L^2(\Omega)}^2& \leq ( \bar{p}-\bar{p}_h, \bar{z}_h-\bar{z})\nonumber\\
 &\leq ((S-S_h)S\bar{z}, \bar{z}_h-\bar{z})+ (S_{h}(S-S_h)\bar{z}, \bar{z}_h-\bar{z})\nonumber\\
 &\quad +((S_h-S) u_d, \bar{z}_h-\bar{z}) + (S_h^2(\bar{z}-\bar{z}_h), \bar{z}_h-\bar{z}).\label{eq:4_terms_var}
\end{align}
The first two terms can be estimated using the Cauchy--Schwarz inequality, Lemma \ref{lemma:stab} and the a priori estimate \eqref{eq:fe_approximation_error}
\begin{align}
((S-S_h)S\bar{z}, \bar{z}_h-\bar{z}) &\leq c \, h^{\min(2,3/2+2s -\varepsilon')}
\lVert \bar{z} \rVert_{\mathbb{H}^{3/2-\varepsilon}(\Omega)}\lVert
\bar{z}-\bar{z}_h \rVert_{L^2(\Omega)},\label{eq:var_proof_1}\\
(S_h(S-S_h)\bar{z}, \bar{z}_h-\bar{z}) &\leq c \, h^{\min(2,3/2+2s -\varepsilon')} \lVert \bar{z} \rVert_{\mathbb{H}^{3/2-\varepsilon}(\Omega)}\lVert \bar{z}-\bar{z}_h \rVert_{L^2(\Omega)}.
\end{align}
The estimate of the third term follows from the Cauchy--Schwarz inequality and the estimate \eqref{eq:fe_approximation_error}
\begin{equation}\label{eq:var_proof_3}
((S_h-S) u_d, \bar{z}_h-\bar{z}) \leq c \, h^{\min(2,3/2+2s -\varepsilon')} \lVert u_d \rVert_{\mathbb{H}^{3/2-\varepsilon}(\Omega)} \lVert \bar{z}-\bar{z}_h \rVert_{L^2(\Omega)},
\end{equation}
 and the last term is non-positive, since $S_h$ is self-adjoint and therefore
\begin{equation}\label{eq:var_proof_4}
(S_h^2(\bar{z}-\bar{z}_h), \bar{z}_h-\bar{z}) \leq -\lVert S_h(\bar{z}-\bar{z}_h) \rVert_{L^2(\Omega)}^2 \leq 0.
\end{equation}
The desired estimate follows from estimates \eqref{eq:var_proof_1}--\eqref{eq:var_proof_4}.

Application of Lemma\nobreakspace \ref {lem:fe_approximation_error}, Lemma\nobreakspace \ref {lemma:stab} and \textup {(\ref {eq:var_a_priori_z})} leads to 
\begin{align*}
\lVert \bar{u} - \bar{u}_h \rVert_{L^2(\Omega)} &\leq \lVert S \bar{z} -S_h \bar{z} \rVert_{L^2(\Omega)} + \lVert S_h \bar{z} -S_h \bar{z}_h \rVert_{L^2(\Omega)} \\
&\leq  c \, h^{\min(2,3/2+2s -\varepsilon')} \left(\lVert \bar{z} \rVert_{\mathbb{H}^{3/2-\varepsilon}(\Omega)} + \lVert u_d \rVert_{\mathbb{H}^{3/2-\varepsilon}(\Omega)} \right),
\end{align*}
and this proves \textup {(\ref {eq:var_a_priori_u})}. The proof of
\textup {(\ref {eq:var_a_priori_u_s})} follows the same path.
\end{proof}

%%%%%%%%%%%%%%%%%%%%%%%%%%%%%%%%%%%%%%%%%%%%%%%%%
%%%%%%%%%%%%%%%%%%%%%%%%%%%%%%%%%%%%%%%%%%%%%%%%%
%%%%%%%%%%%%%%%%%%%%%%%%%%%%%%%%%%%%%%%%%%%%%%%%%

\subsection{A fully discrete scheme}
\label{subsec:full_discrete_scheme}

In this section we consider a fully discrete   
scheme for the optimal control problem \eqref{eq:min_J}--\eqref{eq:control_constraints}. 
We discretize the set of admissible controls with piecewise constant functions
\begin{align*}
 Z_h &:= \{ z_h \in L^{\infty}(\Omega) : z_h \vert_T \in \mathcal{P}_0 \text{ for all } T \in \mathcal{T}_h \}, 
 \quad \text{and }Z_h^{\text{ad}} := Z_h \cap Z_{\text{ad}} .
\end{align*}
The discretized optimal control problem reads as: find $\bar z_h \in Z_h^{\text{ad}}$ such that
\begin{align}\label{eq:opt_cont_pb_full}
\bar{z}_h &= \argmin_{z_h \in Z^{\text{ad}}_h}J_h (z_h) =  \argmin_{z_h \in Z^{\text{ad}}_h} \frac{1}{2} \lVert S_h z_h - u_d \rVert^2_{L^2(\Omega)} + \frac{\mu}{2} \lVert z_h \rVert^2_{L^2(\Omega)}.
\end{align}
Using the same argumentation as in the continuous case, 
it can be shown that the optimal control problem \eqref{eq:opt_cont_pb_full} 
has a unique solution $\bar{z}_h \in Z_{h}^{\text{ad}}$.
 Let $\bar{u}_h = S_h\bar{z}_h$ and $\bar{p}_h = S_h(S_h \bar{z}_h - u_d)$ 
 be the optimal discrete state and optimal discrete adjoint state,
 respectively, associated with $\bar{z}_h $. 
 Then the discrete optimality condition reads as 
 \begin{equation}\label{eq:full_ineq_var}
  (\bar{p}_h + \mu \bar{z}_h, z_h - \bar{z}_h)_{L^2(\Omega)} \geq 0\quad \forall z_h \in Z_h^{\text{ad}}.
 \end{equation}
Before we state the main result of this section, 
we define the $L^2(\Omega)$-projection operator $Q_h: L^2(\Omega) \rightarrow Z_h$ by
\[
\int_{\Omega} (z - Q_h z) v_h = 0 \quad \forall v_h \in Z_h,
\]
which has the following properties
\begin{enumerate}
\item[(L1)] $\lVert Q_h v \rVert_{L^2(\Omega)} \leq c\lVert  v \rVert_{L^2(\Omega)} \quad \forall v \in L^2(\Omega)$,
\item[(L2)] $\lVert v - Q_h v \rVert_{L^2(\Omega)} \leq c \, h \norm{v}_{\mathbb{H}^{1}(\Omega)} \quad \forall v \in {H}^1(\Omega)$.
%\item[(L3)] $\lVert v - Q_h v \rVert_{\mathbb{H}^{-r}(\Omega)} \leq c h^{1+r} \lVert  v \rVert_{H^1(\Omega)} \quad \forall v \in {H}^1(\Omega)$ and $r \in [0,1]$
%\item[(L4)] $Q_h v |_{T} = \frac{1}{\lvert T \rvert} \int_{T} v  \quad \forall T \in \mathcal{T}_{h}.$
\end{enumerate}

\begin{theorem}
\label{thm:FD:rates}
Let the pairs $(\bar{u}(\bar{z}),\bar{z})$ and $(\bar{u}_h(\bar{z}_h),\bar{z}_h)$ 
be the solutions to problems \eqref{eq:min_J} and \eqref{eq:opt_cont_pb_full}, respectively. Then the estimates
\begin{align}\label{eq:full_a_priori_z}
\lVert \bar{z} - \bar{z}_h \rVert_{L^2(\Omega)} &\leq c \, h \left(\norm{\bar{z}}_{\mathbb{H}^{1}(\Omega)} 
+ \lVert u_d \rVert_{\mathbb{H}^{\max{(0, 1-2s+\varepsilon)}}(\Omega)} \right),\\
\label{eq:full_a_priori_u}
\lVert \bar{u} - \bar{u}_{h} \rVert_{\mathbb{H}^{s}(\Omega)} &\leq c \, h \left(\lVert \bar{z} \rVert_{\mathbb{H}^{1}(\Omega)} 
+ \lVert u_d \rVert_{\mathbb{H}^{\max{(0, 1-2s+\varepsilon)}}(\Omega)} \right),\\
\lVert \bar{u} - \bar{u}_{h} \rVert_{L^{2}(\Omega)} &\leq c \, h \left(\lVert \bar{z} \rVert_{\mathbb{H}^{1}(\Omega)} 
+ \lVert u_d \rVert_{\mathbb{H}^{\max{(0, 1-2s+\varepsilon)}}(\Omega)} \right)
\end{align}
hold.
\end{theorem}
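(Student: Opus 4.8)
The plan is to mimic the structure of the variational-discretization proof, but now accounting for the control discretization error via the $L^2$-projection $Q_h$. The central object is the comparison of the two variational inequalities \eqref{eq:variational_inequality} and \eqref{eq:full_ineq_var}. Since $\bar z_h \in Z_h^{\text{ad}} \subset Z_{\text{ad}}$ is admissible in \eqref{eq:variational_inequality}, testing with it directly is fine; but $\bar z \notin Z_h^{\text{ad}}$ in general, so in \eqref{eq:full_ineq_var} I would test with $Q_h \bar z$. Because $a \le 0 \le b$ and $Q_h$ preserves affine functions locally, $Q_h \bar z \in Z_h^{\text{ad}}$. Adding the two inequalities and inserting $\pm \mu \bar z_h$, $\pm \bar p$ appropriately yields
\begin{align*}
\mu \lVert \bar z - \bar z_h \rVert_{L^2(\Omega)}^2
&\le (\bar p - \bar p_h, \bar z_h - \bar z)_{L^2(\Omega)} + (\mu \bar z_h + \bar p_h, Q_h \bar z - \bar z)_{L^2(\Omega)}.
\end{align*}
The first term is split exactly as in \eqref{eq:4_terms_var} into four pieces: two finite-element errors $(S-S_h)S\bar z$ and $S_h(S-S_h)\bar z$, one data error $(S_h-S)u_d$, and the sign-definite term $(S_h^2(\bar z - \bar z_h), \bar z_h - \bar z) \le 0$ by self-adjointness of $S_h$. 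The difference from the variational case is that now I only have $\bar z \in \mathbb{H}^1(\Omega)$ (Lemma \ref{lem:regularity_control} is not invoked at its full strength; rather $H^1$-regularity suffices), so the FE estimate \eqref{eq:fe_approximation_error} is applied with $\delta = 1$, giving $h^{\min(2, 1+2s)} \le h$ — linear order — and for the $u_d$ term with $\delta = \max(0, 1-2s+\varepsilon)$, again yielding order $h$; this explains the stated norms on the right-hand side.

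The genuinely new term is $(\mu \bar z_h + \bar p_h, Q_h \bar z - \bar z)_{L^2(\Omega)}$. Here I would use the defining orthogonality of $Q_h$: since $\mu \bar z_h \in Z_h$, the piece $(\mu \bar z_h, Q_h \bar z - \bar z) = 0$ vanishes outright. For the remaining $(\bar p_h, Q_h \bar z - \bar z)_{L^2(\Omega)}$, I again exploit orthogonality to subtract $Q_h \bar p_h$ (or better, $Q_h$ of a smooth surrogate of $\bar p$), writing $(\bar p_h, Q_h \bar z - \bar z) = (\bar p_h - Q_h \bar p_h, Q_h \bar z - \bar z)$, and then estimate by Cauchy--Schwarz: $\lVert Q_h \bar z - \bar z\rVert_{L^2(\Omega)} \le c\,h \lVert \bar z\rVert_{\mathbb{H}^1(\Omega)}$ by (L2), while $\lVert \bar p_h - Q_h \bar p_h\rVert_{L^2(\Omega)} \le c\,h\lVert \bar p_h\rVert_{\mathbb{H}^1(\Omega)}$, again by (L2), and $\bar p_h = S_h(S_h \bar z_h - u_d)$ is bounded in $\mathbb{H}^1(\Omega)$ uniformly in $h$ by the smoothing of $S_h$ (a variant of Lemma \ref{lemma:stab}, using the $2s$-shift twice, combined with $u_d \in \mathbb{H}^{3/2}(\Omega)$). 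This produces an $O(h)$ bound. Collecting the four terms from the first bracket and this last $O(h)$ term, dividing by $\mu$, and using Young's inequality to absorb the $\lVert \bar z - \bar z_h\rVert_{L^2(\Omega)}$ factors, gives \eqref{eq:full_a_priori_z}.

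For the state estimates, I would insert the discrete solution operator and split
$\lVert \bar u - \bar u_h\rVert = \lVert S\bar z - S_h \bar z_h\rVert \le \lVert (S - S_h)\bar z\rVert + \lVert S_h(\bar z - \bar z_h)\rVert$,
bound the first summand by Lemma \ref{lem:fe_approximation_error} (order $h$ in both $L^2$ and $\mathbb{H}^s$, using $\bar z \in \mathbb{H}^1$), and the second by the stability of $S_h$ from Lemma \ref{lemma:stab} together with the control estimate just proved. This immediately yields \eqref{eq:full_a_priori_u} and the $L^2$ state bound. The main obstacle I anticipate is the handling of the projection-error term: specifically, justifying uniform $\mathbb{H}^1(\Omega)$-regularity of $\bar p_h$ (as opposed to $\bar p$) independently of $h$, since $S_h$ only maps into a finite element space and one must argue that the discrete smoothing constant does not blow up — this should follow from the $2s$-shift being inherited at the discrete level via the error estimates of Theorem \ref{thm:fe_error_dunford_talyor}, but it is the one spot requiring care rather than routine bookkeeping.
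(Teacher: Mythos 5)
Your overall skeleton coincides with the paper's: test the continuous inequality with $\bar z_h$, the discrete one with $Q_h\bar z$ (admissible because $Q_h\bar z$ is the cellwise mean of a function with values in $[a,b]$), arrive at
$\mu\lVert\bar z-\bar z_h\rVert^2\le(\bar p-\bar p_h,\bar z_h-\bar z)+(\bar p_h+\mu\bar z_h,Q_h\bar z-\bar z)$,
and treat the first term exactly as in \eqref{eq:4_terms_var}. Your observation that $(\mu\bar z_h,Q_h\bar z-\bar z)=0$ by the orthogonality of $Q_h$ is correct and even slightly cleaner than the paper, which keeps this piece and estimates $\mu(\bar z_h-\bar z,Q_h\bar z-\bar z)$ by Cauchy--Schwarz. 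The state estimates are handled identically to the paper.

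The genuine gap is in your treatment of $(\bar p_h,Q_h\bar z-\bar z)$. You reduce it to $\lVert\bar p_h-Q_h\bar p_h\rVert_{L^2(\Omega)}\le c\,h\lVert\bar p_h\rVert_{\mathbb{H}^1(\Omega)}$ and then need $\lVert\bar p_h\rVert_{\mathbb{H}^1(\Omega)}$ bounded uniformly in $h$. This is exactly the step that fails for small $s$: the discrete solution operator $S_h$ only smooths by $2s$ orders uniformly in $h$ (one can check directly from \eqref{eq:sinc_quadrature} that $\lVert\nabla S_h g\rVert_{L^2(\Omega)}\lesssim k\sum_l e^{(1/2-s)kl}\lVert g\rVert_{L^2(\Omega)}$, which blows up as $k\to0$ when $s<1/2$), and even "applying the $2s$-shift twice" only reaches $\mathbb{H}^{4s}(\Omega)$ for a piecewise-constant control $\bar z_h$, which is weaker than $\mathbb{H}^1(\Omega)$ whenever $s<1/4$. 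No uniform discrete $\mathbb{H}^1$ bound on $\bar p_h$ is available or proved in the paper. The repair is the one you gesture at with your "smooth surrogate of $\bar p$": add and subtract the \emph{continuous} adjoint, writing $(\bar p_h,Q_h\bar z-\bar z)=(\bar p_h-\bar p,Q_h\bar z-\bar z)+(\bar p-Q_h\bar p,Q_h\bar z-\bar z)$. The first piece is $O(h)\cdot O(h)$ by \eqref{eq:full_2_1_terms} and (L2); the second uses (L2) applied to $\bar p$, which \emph{is} in $\mathbb{H}^1(\Omega)$ because $\bar p=S(\bar u-u_d)$ inherits full regularity from $u_d\in\mathbb{H}^{3/2}(\Omega)$ and $\bar z\in\mathbb{H}^{3/2-\varepsilon}(\Omega)$. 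This is precisely the decomposition \eqref{eq:full_3_terms} in the paper. With that substitution your argument closes; as written, it does not.
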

\begin{proof}
The proof is similar to the proof of \cite[Theorem 5.16]{Antil}. 
First, we use $z = \bar{z}_h \in Z_{\textrm{ad}}$ in the continous optimality condition \eqref{eq:variational_inequality} 
to get
\[
(\bar{p} + \mu \bar{z},\bar{z}_h - \bar{z}) \geq 0. 
\]
Second, using $z_h = Q_h \bar{z} \in Z_h^{\text{ad}}$ in the discrete optimality condition
\eqref{eq:full_ineq_var}
and introducing $\bar{z}$, we arrive at
\[
(\bar{p}_h + \mu \bar{z}_h , Q_h \bar{z}- \bar{z}) + (\bar{p}_h + \mu \bar{z}_h , \bar{z}- \bar{z}_h)  \geq 0.
\]
Consequently, adding the previous two inequalities together we get
\[
(\bar{p} -\bar{p}_h + \mu (\bar{z} - \bar{z}_h) ,\bar{z}_h - \bar{z}) + (\bar{p}_h + \mu \bar{z}_h , Q_h \bar{z}- \bar{z})  \geq 0.
\]
Hence, we can conclude
\begin{equation}\label{eq:full_2_terms}
\mu \lVert \bar{z} - \bar{z}_h \rVert_{L^2(\Omega)}^2 \leq  (\bar{p} -\bar{p}_h  ,\bar{z}_h - \bar{z}) + (\bar{p}_h + \mu \bar{z}_h , Q_h \bar{z}- \bar{z}).
\end{equation}
The estimate for the first term on the right hand side of \eqref{eq:full_2_terms} 
follows from the estimate for \eqref{eq:4_terms_var} 
with an appropriate application of estimate \eqref{eq:fe_approximation_error}
\begin{equation}\label{eq:full_2_1_terms}
(\bar{p} -\bar{p}_h  ,\bar{z}_h - \bar{z}) \leq c \, h \left( \lVert \bar{z} \rVert_{\mathbb{H}^{1}(\Omega)} + \lVert u_d \rVert_{\mathbb{H}^{\max{(0, 1-2s+\varepsilon)}}(\Omega)} \right) \lVert \bar{z}-\bar{z}_h \rVert_{L^2(\Omega)}.
\end{equation}
To estimate the second term we add and substract $\bar{p}$ and $\mu \bar{z}$ and get 
\begin{equation}\label{eq:full_3_terms}
(\bar{p}_h + \mu \bar{z}_h , Q_h \bar{z}- \bar{z})  
= (\bar{p} + \mu \bar{z} , Q_h \bar{z}- \bar{z}) + \mu (\bar{z}_h - \bar{z} , Q_h \bar{z}- \bar{z}) + (\bar{p}_h - \bar{p} , Q_h \bar{z}- \bar{z}).
\end{equation}
To estimate the first term on the right hand side of \eqref{eq:full_3_terms} we use the definition of the operator $Q_h$ and obtain
\[
(\bar{p} + \mu \bar{z} , Q_h \bar{z}- \bar{z})  = (\bar{p} + \mu \bar{z} - Q_h(\bar{p} + \mu \bar{z}) , Q_h \bar{z}- \bar{z}) \leq c h^2 \lVert \bar{p} + \mu \bar{z} \rVert_{\mathbb{H}^{1}(\Omega)} \lVert  \bar{z} \rVert_{\mathbb{H}^{1}(\Omega)},
\]
where the last inequality follows from property (L2) of the $L^2$-projection. 
The application of the Cauchy-Schwarz inequality yields the desired estimate of the second term 
\begin{equation}\label{eq:full_3_1_terms}
\mu (\bar{z}_h - \bar{z} , Q_h \bar{z}- \bar{z}) \leq c \, h \lVert \bar{z} - \bar{z}_h \rVert_{L^2(\Omega)} \lVert  \bar{z} \rVert_{\mathbb{H}^{1}(\Omega)}.
\end{equation}
The estimate of the third term can be shown analoguous to \eqref{eq:full_2_1_terms} with an application of (L2)
and yields
\begin{align}
  (\bar p_h- \bar p,Q_h \bar z - \bar z) 
  &\leq c \, h \left( \lVert \bar{z} \rVert_{\mathbb{H}^{1}(\Omega)} + \lVert u_d \rVert_{\mathbb{H}^{\max{(0, 1-2s+\varepsilon)}}(\Omega)} \right)
  \lVert Q_h \bar z - \bar z \rVert_{L^2(\Omega)} \nonumber \\
  &\leq c \, h ^2\left( \lVert \bar{z} \rVert_{\mathbb{H}^{1}(\Omega)} + \lVert u_d \rVert_{\mathbb{H}^{\max{(0, 1-2s+\varepsilon)}}(\Omega)} \right)\lVert \bar z \rVert_{\mathbb{H}^{1}(\Omega)}
  \label{eq:full_3_2_terms}
\end{align}
Estimates \eqref{eq:full_2_1_terms} -- \eqref{eq:full_3_2_terms} together with an 
appropriate application of H\"older's and Young's inequality yield the desired estimate \eqref{eq:full_a_priori_z}.

In order to prove estimate \eqref{eq:full_a_priori_u} we proceed as follows. Introducing intermediate functions, applying the triangle inequality and using the stability results from Lemma \ref{lemma:stab} yields
\begin{equation}
\label{eq:full_a_priori_u_1}
\begin{aligned}
\norm{\bar{u} - \bar{u}_{h}}_{\mathbb{H}^{s}(\Omega)} &\leq \norm{(S-S_{h})\bar{z}}_{\mathbb{H}^{s}(\Omega)} + \norm{S_{h}(\bar{z} - \bar{z}_{h})}_{\mathbb{H}^{s}(\Omega)}\\
&\leq \norm{(S-S_{h})\bar{z}}_{\mathbb{H}^{s}(\Omega)} + c \norm{\bar{z} - \bar{z}_{h}}_{L^{2}(\Omega)}.
\end{aligned}
\end{equation}
Hence an application of the a priori error estimate \eqref{eq:fe_approximation_error} with $\delta = 1 - \varepsilon'$ and estimate \eqref{eq:full_a_priori_z} proves \eqref{eq:full_a_priori_u}.
The third estimate is obtained in the same way.
\end{proof}

\begin{remark}
  We see, that the rates, that are obtained by Theorem\nobreakspace \ref {thm:FD:rates} are not optimal with respect to the state
  and that they are dictated by the optimal linear rate, that we obtain for the control.
  In Section\nobreakspace \ref {sec:Numerics} we numerically measure higher rates for the optimal state, namely the same
  as for the variational discretization. The proof of higher rates will be addressed in future work and is mainly based on supercloseness results \cite{MeyerRoesch2004} for the control. These convergence rates carry over to the rates for the discrete control computed by the so-called post-processing step, i.e. using the projection formula \textup {(\ref {eq:disc_proj})} to obtain a new, piecewise linear approximation of the control. Numerical experiments for this post-processing approach are also contained in Section\nobreakspace \ref {sec:Numerics}. The theoretical analysis is left for future work.
\end{remark}

\section{Implementation}
\label{sec:Implementation}

In this section, we introduce a solver for the finite element approximation
\textup {(\ref {eq:dunford_taylor_FE})}.
The use of the Balakrishnan formula for inverting the fractional operator leads to the necessity of solving a large number of independent linear systems of equations to obtain an accurate solution. 
However, these systems carry a lot of structure that can be used to design efficient 
iterative schemes based on tailored Krylov subspace methods.

Following \cite{Bonito}, application of the sinc quadrature to the
Balakrishnan representation \eqref{eq:balakrishnan_formula}
gives rise to the discretization of the state
equation \eqref{eq:state_equation}.
For convenience, we repeat the resulting approximation here.
\begin{equation}
  \label{eq:im:uhk}
u_{h}^{k} = \frac{\sin{(s \pi)}}{\pi} k \sum_{l = -N_{-}}^{N_{+}} e^{(1-s)kl}v_{h}^{l}
\end{equation}
where 
$v_{h}^{l} \in \mathbb{U}(\mathcal{T}_{h})$ is the unique solution of the Galerkin variational problem
\begin{equation}
\label{eq:DT_FEM_systems}
\int_{\Omega} \nabla v_{h}^l \cdot \nabla w_{h} dx 
+ e^{kl} \int_{\Omega} v_{h}^l w_{h} dx  
= \int_{\Omega} z w_{h} dx \quad \forall w_{h} \in \mathbb{U}(\mathcal{T}_{h}).
\end{equation}
The evaluation of \textup {(\ref {eq:im:uhk})} requires the solution of  $N_{+} + N_{-} + 1$ linear
systems of the form
\begin{equation}
  \label{eq:im:DiscreteDTSystem}
\left( A + \alpha_l M \right) V^l = Z, \quad -N_{-} \leq l \leq N_{+}.
\end{equation}
Here, $\alpha_l = e^{kl}$ and $A$, $M$ denote respectively the corresponding stiffness and mass
matrices of the system, $Z$ denotes the load vector, while $V^l$ denotes the node vector for
$v^l_h$.
Notice that the $N_{-}~+~N_{+}~+~1$ linear systems in~\eqref{eq:im:DiscreteDTSystem} are independent 
for different values of~$l$.
Hence, a first approach for solving systems \textup {(\ref {eq:im:DiscreteDTSystem})} might be the use of massive
parallelization.
However, we shall follow a more efficient approach that exploits the structure of the
linear systems and uses tailored conjugated gradients solvers.
 
We start by normalizing the systems.
Application of a standard mass-lumping strategy results in a diagonal mass matrix~$M_h$. 
We  define $\rho := \|M_h^{-1/2} A  M_h^{-1/2}\|_{\infty}$,
$\tilde{A} =  \frac{1}{\rho}{M_h}^{-1/2}A{M_h}^{-1/2}$, 
$\tilde{\alpha}_l = \frac{1}{\rho}\alpha_l$, 
$\tilde{V}^l = {M_h}^{1/2} V^l$ 
and $\tilde{Z} = \frac{1}{\rho}{M_h}^{-1/2}Z$.
Then, the linear systems~\textup {(\ref {eq:im:DiscreteDTSystem})} can be reformulated as
\begin{equation}
  \label{eq:im:DiscreteDTSystem_modified}
\Big( \tilde{A} + \tilde{\alpha}_l I \Big) \tilde{V}^l = \tilde{Z}, \quad -N_{-} \leq l \leq N_{+}.
\end{equation}
We can estimate the $2-$condition number of system $l$ in \textup {(\ref {eq:im:DiscreteDTSystem_modified})}
by
\begin{align}
  \label{eq:im:estimateCond}
  \kappa\left(\tilde{A} + \tilde{\alpha}_l I\right) = 
  \frac{\lambda_{\max}(\tilde{A} + \tilde{\alpha}_l I)}
  {\lambda_{\min}(\tilde{A} + \tilde{\alpha}_l I)}
  = 
    \frac{\lambda_{\max}(\tilde{A}) + \tilde{\alpha}_l}
  {\lambda_{\min}(\tilde{A}) + \tilde{\alpha}_l} 
  \leq 1 + \min\left(
\frac{\lambda_{\max}(\tilde{A})}{\tilde{\alpha}_l},
 \kappa(\tilde{A})
\right)
\end{align}
where $\lambda_{\max}(\tilde A)$ and $\lambda_{\min}(\tilde A)$ denote the largest and smallest
eigenvalue of the symmetric positive definit matrix $\tilde A$, respectively.
From \textup {(\ref {eq:im:estimateCond})} we observe, that for small $\tilde \alpha_l$ the condition number of
$\tilde A + \tilde \alpha_l I$ is close to the condition number of $\tilde A$, which is a scaled
stiffness matrix, while for large $\tilde \alpha_l$ the condition number converges to 1.
By introducing the scaling with $\rho$, we fix $\lambda_{\max}(\tilde A) \leq 1$.

Thus for $l$ decreasing from $N_+$ to $N_-$ the condition number of the linear system $\tilde A + \tilde \alpha_l I$
is increasing. While for $l \equiv N_+$ conjugated gradients without preconditioning is a well suited solver, 
for $l\equiv N_-$ preconditioning in general is required. 
Due to this observation, we consider two adapted linear solvers.

\begin{itemize}
\item Linear problems, for which $l$ is sufficiently large, 
are considered to be well-conditioned, and no further preconditioning is needed to obtain fast convergence of the conjugate gradient solver.
Thanks to  the shift-invariance property of Krylov subspace methods 
the Krylov spaces that are generated during the conjugate gradients method are independent of $l$.
As the build-up of the Krylov space contains the only matrix-vector multiplication in the conjugated gradients method,
the dimension of the space is equal to the number of matrix-vector multiplications. We fix a number $N_{\max}$ of multiplications and proceed as follows.
Starting with $l\equiv N_+$ we solve linear systems for decreasing $l$, where we reuse the Krylov spaces from previous solutions.
We stop at $l\equiv N_0$ as soon as the required Krylov space has reached the dimension of $N_{\max}$. 

For the implementation, we use a variant of 
the conjugate gradient method proposed in~\cite{Frommer1999}.
In Algorithm\nobreakspace \ref {alg:im:ShiftInvariant} we summarize the
pseudo code.
\item For the resulting systems $N_-,\ldots, N_0$ preconditioning is necessary which conflicts
with the shift invariant property of Krylov methods.
Note that subsequent systems are still similar, and thus, that a preconditioner for system
$l$ is also a (worse, but not necessarily bad)  preconditioner for system $l+1$. Therefore, we use the standard
approach for solving system \textup {(\ref {eq:im:DiscreteDTSystem_modified})} sequentially and recalculating a
new preconditioner whenever the old one is no longer good enough, i.e. as soon as a given maximum number of iterations is exceeded in a conjugated gradients method, see Algorithm\nobreakspace \ref {alg:im:sequential}.

\end{itemize}

In Section\nobreakspace \ref {ssec:num:DunfordTaylor} we report on the behaviour of the proposed solver.

\begin{algorithm}
\SetAlgoLined
\KwIn{$\tilde A,\tilde \alpha,\tilde Z, N_{\max}$}
\KwData{Set: $\mathcal K=\emptyset$, $l=N_+$}
\KwOut{$N_0$}
\While{$dim (\mathcal K) < N_{\max}$}
{ 
\For(\tcp*[h]{cg-iteration for system $l$}){$k=1,\ldots$}
{
\If{$k>\mbox{dim}(\mathcal K)$}
{ 
Calculate basis vector for $k$-th Krylov space and store into $\mathcal K$\;
\label{alg:im:ShiftCG_increaseKrylovSpace}
}%end increase Krylov space
{}
Solve linear system $l$ in space $\mathcal K_k$
using \cite[Alg.~4]{Frommer1999}\;
\label{alg:im:ShiftCG_solveInKrylovSpace}
}% end cg for one system
$l:=l-1$\;
}%end for all systems
$N_0 := l$
\caption{Pseudo code for solving the well-conditioned systems. The only matrix-vector
multiplication appears in line\nobreakspace \ref {alg:im:ShiftCG_increaseKrylovSpace}.
Here the space $\mathcal K_k$ in line\nobreakspace \ref {alg:im:ShiftCG_solveInKrylovSpace} is
the span over the first $k$ basis elements of $\mathcal K$.}
\label{alg:im:ShiftInvariant}
\end{algorithm} 

\begin{algorithm}
\SetAlgoLined
\KwIn{$\tilde A,\tilde \alpha,\tilde Z, N_0$}
\KwData{Set: $N_{\max}>0$, $N_{iter} = N_{\max}+1$, }
\For{$l=-N_-\ldots N_0$}
{ 
\If{$N_{iter} >N_{\max}$ }
{
Build up amg preconditioner  $P$ for system $l$\;
}
Solve systems $l$ with preconditioned conjugate gradients method using preconditioner $P$ with
$N_{iter}$ iterations\;
}%end for all systems
\caption{Pseudo code for solving the not well-conditioned systems.}
\label{alg:im:sequential}
\end{algorithm}

\begin{remark}[An alternative solver]
  In \cite{Chan1999} a conjugate gradients method is proposed that uses Krylov spaces generated for one of the linear systems \textup {(\ref {eq:im:DiscreteDTSystem_modified})}, called seed
  system, to generate good initial values, or even solutions, for the other systems.
  Thanks to the particular structure of
  the systems \textup {(\ref {eq:im:DiscreteDTSystem_modified})} this can be done without additional
  matrix-vector multiplications, see \cite[Sec.~3.1]{Chan1999}. 
  Upon convergence, another system is chosen as a seed system, for which the Krylov spaces are
  generated.
  In our implementation, we combine this approach with algebraic multigrid (amg) preconditioning, 
  and choose that system as the next seed system, that
  currently has the largest residuum.
  
  This approach requires storing the solution to all systems in memory to apply the Krylov spaces and to find the next seed system. Unfortunately, this turned out to be not feasible for
  fine meshes in 3D, but we obtained very fast convergence of the method, when applicable.
  
  A combination of the proposed sequential solver as in Algorithm\nobreakspace \ref {alg:im:sequential} and the solver
  proposed in \cite{Chan1999} seems possible (at least with some restrictions) and will be subject
  to future work.
\end{remark}
Finally we note that a large number of tailored Krylov methods is proposed to deal with shifted
systems that require preconditioning and  we only refer to
\cite{2003_Benzi_ApproximateInversePrecond_for_Shift,
2003_Frommer_bicgstab_shiftedSystems,
2016_Soodhalter_recursiveGMRES_generalPrecond,
2014_SoodhalterSzydXue_KrylovRecycling_shiftesSystems,
2017_ZhongGu_FlexibleAdaptiveSGMRES_shiftedSystems}.

\section{Numerical results}
\label{sec:Numerics}

In this section, we validate the theoretical rates of convergence derived in Section\nobreakspace \ref {sec:error_estimates}. 
In Section\nobreakspace \ref {ssec:num:DunfordTaylor}, we investigate the solver 
 for the fractional Laplace proposed in Section\nobreakspace \ref {sec:Implementation}.
In Section\nobreakspace \ref {ssec:OptControl}, we present the convergence rates of the fully discrete finite element scheme for the approximation of the optimal control problem.  
All numerical experiments are conducted for a range of values of the fractional exponent~$s$.

We implement the solver proposed in Section\nobreakspace \ref {sec:Implementation} in C++ using the
PETSc linear algebra package \cite{petsc-web-page} and solve the optimal control problem
using the TAO package of PETSc using the bound-constrained limited-memory variable-metric method
(tao\_blmvm), which is a limited memory BFGS method.
 We generate meshes, finite element functions and assemble matrices
using FEniCS \cite{LoggMardalEtAl2012a} through the C++ interface.

 For the solver of the fractional operator proposed in Section\nobreakspace \ref {sec:Implementation} we fix $N_{\max} = 500$ for 2D simulations and 
 $N_{\max} = 250$ for 3D simulations. 
 The individual linear systems are solved up to a relative accuracy of $10^{-8}$. 
 For Algorithm\nobreakspace \ref {alg:im:sequential}, we calculate a  
 new preconditioner as soon as more than $N_{\max}=20$ iterations are taken in the preconditioned conjugate gradients method.
 As amg preconditioner we use  2 V-cycles of Hypre \cite{hypre-web-page} that is accessed through the PETSc interface. We stop the optimization as soon as the $l^2$-norm of the projected gradient is smaller or equal to $10^{-5} \sqrt{h^n}$, where $h$ is the length of the longest edge in the finite element mesh. Here the scaling with $h$ mimics the different scaling of the $l^2$-norm and the
 $L^2(\Omega)$-norm.

\subsection{The solver for the fractional operator}
\label{ssec:num:DunfordTaylor}
Let us first report on the performance of the proposed solver for the systems
\textup {(\ref {eq:im:DiscreteDTSystem_modified})}.
As a test example we use $\Omega = (0,1)^n$, $n \in \{2,3\}$ and
set $f = \min(0.25,f_0)$, where $f_0(m) = 0.5$, with $m$ the center of $\Omega$,  
$f_0|_{\partial\Omega} = 0$ and $f_0$ is linearly interpolated between these values.
Note that no analytical solution is known for this
right-hand side $f$, and that $f$ enjoys $\mathbb{H}^{3/2-\varepsilon}(\Omega)$ regularity, which is the maximal regularity of the optimal control $z$.
We solve the equation $(-\Delta)^s u_h^k = f$ on a sequence of homogeneously refined meshes and
use the solution on the finest mesh (with $N_\Omega = 4198401$ nodes for $n=2$) as the reference solution. 
These meshes are chosen, such that all
kinks in $f$ are resolved, and the integration of $f$ is done with no numerical error.

In Table\nobreakspace \ref {tab:num:DunfordTaylor_2D} and Table\nobreakspace \ref {tab:num:DunfordTaylor_3D} we report on the solver for the cases $n=2$ and $n=3$, respectively, and for $s=0.05$ and $s=0.5$.
 
\begin{table}
\centering
\begin{tabular}{rcccc}
$N_\Omega$ & $N_{\alpha} (s = 0.05)$ & $N_{\alpha} (s= 0.5)$ &  $N_{\alpha} (s= 0.95)$ &$\#amg\,\,setup$\\ 
\hline 
     25 &   58 (58/0)     &   13 (13/0)    &   58 (58/0)    & 0 \\ 
     81 &  158 (158/0)    &   31 (31/0)    &  158 (158/0)   & 0 \\ 
    289 &  308 (308/0)    &   61 (61/0)    &  308 (308/0)   & 0 \\ 
   1089 &  508 (508/0)    &   99 (99/0)    &  508 (508/0)   & 0 \\ 
   4225 &  757 (757/0)    &  145 (145/0)   &  757 (757/0)   & 0 \\ 
  16641 & 1056 (1056/0)   &  203 (203/0)   & 1056 (1056/0)  & 0 \\ 
  66049 & 1406 (1406/0)   &  269 (269/0)   & 1406 (1406/0)  & 0 \\ 
 263169 & 1806 (1677/129) &  345 (135/210) & 1806 (54/1752) & 1 \\ 
1050625 & 2254 (2089/165) &  429 (163/266) & 2254 (62/2192) & 2 \\ 
4198401 & 2753 (2548/205) &  525 (196/329) & 2753 (72/2681) & 2 
\end{tabular}

\caption{The behavior of the proposed solver for the fractional operator for 2D simulation.
$N_\Omega$ denotes the number of degrees of freedom in $\Omega$, $N_\alpha = N_- + N_+ + 1$ denotes the number of linear systems to solve. In brackets, we show how many systems are solved by
Algorithm\nobreakspace \ref {alg:im:ShiftInvariant} and  Algorithm\nobreakspace \ref {alg:im:sequential} respectively.
We give results for $s=0.05$, $s=0.5$, and $s=0.95$.
The number of amg setups in Algorithm\nobreakspace \ref {alg:im:sequential} are equal in all cases.
}
\label{tab:num:DunfordTaylor_2D}
\end{table}
 
 \begin{table}
 \centering
\begin{tabular}{rcccc}
$N_\Omega$ & $N_{\alpha} (s = 0.05)$ & $N_{\alpha} (s= 0.5)$& $N_{\alpha} (s= 0.95)$ & $\#amg\,\,setup$\\ 
\hline 
    125 &   38 (38/0)  &    9 (9/0)   &  38 ( 38/0)  & 0 \\ 
    729 &  124 (124/0) &   25 (25/0)  & 124 (124/0)  & 0 \\ 
   4913 &  258 (258/0) &   51 (51/0)  & 258 (258/0)  & 0 \\ 
  35937 &  444 (444/0) &   85 (85/0)  & 444 (444/0)  & 0 \\ 
 274625 &  678 (678/0) &  131 (131/0) & 678 (678/0)  & 0 \\ 
2146689 &  964 (895/69)&  185 (73/112)& 964 (30/934)  & 1
\end{tabular}

\caption{Behavior of the proposed solver for the fractional operator for 3D simulation.
For an explanation of the abbreviations, see Table\nobreakspace \ref {tab:num:DunfordTaylor_2D}.
}
\label{tab:num:DunfordTaylor_3D}
\end{table} 

We observe that in fact, the number of amg setups is very small or a set up is not even necessary, which indicates, how closely
related the systems are.

Finally, for small $s$ the operator is closer to identity, and thus, more systems are
well-conditioned, which can be seen by the number of systems that are solved by
Algorithm\nobreakspace \ref {alg:im:ShiftInvariant} in comparison to the number of systems solved by
Algorithm\nobreakspace \ref {alg:im:sequential}.

Let us briefly comment on the convergence rate from Corollary\nobreakspace \ref {cor:fe_error_estimates_regular} for $u_h^k$.
As the above defined right hand side enjoys $f \in \mathbb{H}^{3/2-\varepsilon}(\Omega)$,
we expect a rate of $h^{\min(2,3/2+2s-\varepsilon')}$.
In Table\nobreakspace \ref {tab:num:DF_ratesL2} we show the observed convergence rates for $n=2$ and $s\in \{0.05,0.1,0.25\}$, 
which indeed confirm the theoretical predictions.  For $n=3$ memory consumption
restricts the quality of the reference solution, such that we do not measure a rate for $n=3$.
  
\begin{table}
 
 \begin{tabular}{rccccccc}
& & \multicolumn{2}{c}{(s=0.05)} & \multicolumn{2}{c}{(s=0.10)} & \multicolumn{2}{c}{(s=0.25)}\\ 
$N_\Omega$ & $h$ & $\eta_{L^2(\Omega)}$ & $r^{0.05}_{L^2(\Omega)}$ & $\eta_{L^2(\Omega)} $  & $ r^{0.10}_{L^2(\Omega)}$ & $\eta_{L^2(\Omega)}$ & $r^{0.25}_{L^2(\Omega)}$ \\ 
\hline 
     25 & 0.3536 & 0.038866 & 0.00 & 0.033601 & 0.00 & 0.009917 & 0.00\\ 
     81 & 0.1768 & 0.012021 & 1.69 & 0.009930 & 1.76 & 0.002924 & 1.76\\ 
    289 & 0.0884 & 0.003728 & 1.69 & 0.002913 & 1.77 & 0.000662 & 2.14\\ 
   1089 & 0.0442 & 0.001190 & 1.65 & 0.000871 & 1.74 & 0.000158 & 2.07\\ 
   4225 & 0.0221 & 0.000387 & 1.62 & 0.000265 & 1.72 & 0.000038 & 2.05\\ 
  16641 & 0.0110 & 0.000127 & 1.61 & 0.000081 & 1.71 & 0.000009 & 2.09\\ 
  66049 & 0.0055 & 0.000041 & 1.62 & 0.000025 & 1.72 & 0.000002 & 2.06\\ 
 263169 & 0.0028 & 0.000013 & 1.68 & 0.000007 & 1.77 & 0.000001 & 2.10
\end{tabular}

  \caption{Experimental convergence rates for the numerical solution of the fractional Laplace
  equation with $f \in \mathbb{H}^{3/2-\varepsilon}(\Omega)$. 
  We observe, that the expected rate $r^s_{L^2(\Omega)} =  \min(2,3/2+2s-\varepsilon')$ is fulfilled for all examples.
  } 
  \label{tab:num:DF_ratesL2}    
\end{table}

\subsection{Optimal Control Problem}\label{ssec:OptControl}
To verify the theoretical convergence rates of the finite element discretization of the optimal control problem, 
we perform numerical experiments without a known optimal solution.
We use the domain $\Omega = (0, 1)^2$ and set the desired state to be equal to an eigenfunction of the Laplacian on the square, 
namely $u_d = \sin (2 \pi x) \sin (2 \pi y)$ and $f \equiv 0$. 
We consider three different values of the fractional parameter, 
namely $s \in \lbrace 0.05, 0.25, 0.5\rbrace$ and choose $a = -0.8, b = 0.8$, 
such that the box-constraints are attained in some subdomain of~$\Omega$.
The optimal solution for $h=0.0014$ is considered as reference solution.

Results of the numerical tests are summarized in Figure\nobreakspace \ref {fig:OPT_Plots}. 
First order convergence of the approximation of the control is obtained, 
which is in line with~\textup {(\ref {eq:full_a_priori_z})}. 

We also report on results using the post-processing approach \cite{MeyerRoesch2004}.
Here the projection formula \textup {(\ref {eq:disc_proj})} is used to obtain a higher order approximation for the optimal control.
Higher order means, that instead of an approximation with piecewise constant functions, 
a piecewise linear approximation is obtained that has the same structure as the optimal control obtained with variational discretization. 
We expect thus the same optimal rate of convergence for this post-processed optimal control $\bar{z}_h^{PP}$ 
as for the variational discretization approach, namely 
$\|\bar{z}-\bar{z}_h^{PP}\|_{L^2(\Omega)} \leq c \, h^{\min(2,3/2+2s-\varepsilon')}$.
In Figure\nobreakspace \ref {fig:OPT_Plots} we observe the expected higher rates for the optimal controls with post-processing approach for $n=2$.

We also investigate the finite element approximation of the state in the $L^2(\Omega)$- and $H^s(\Omega)$-norms, 
the latter being estimated using Gagliardo-Nirenberg interpolation inequality 
$\| \bar{u} - \bar{u}_h\|_{H^s(\Omega)} \lesssim \| \bar{u} - \bar{u}_h\|^{1 - s}_{L^2(\Omega)} \|\bar{u} - \bar{u}_h \|^s_{H^1(\Omega)}$. We observe $h^{\min(2,3/2 + 2s)}$ order of convergence in the $L^2(\Omega)$-norm and $h^{\min(2-s,3/2 + s)}$ order of convergence in the $H^s(\Omega)$-norm. 
The theoretical justification as well as the analysis of the post-processing approach is left for future work.
\begin{figure}[!]
  
% \begin{minipage}{0.5\textwidth}
% \LARGE
% \resizebox{\textwidth}{!}{
% \input{images/Plot_2D_control}
% }
% \end{minipage}
% \begin{minipage}{0.5\textwidth}
% \LARGE
% \resizebox{\textwidth}{!}{
% \input{images/Plot_2D_state}
% }
% \end{minipage} 

% This file was created by matlab2tikz.
%
%The latest updates can be retrieved from
%  http://www.mathworks.com/matlabcentral/fileexchange/22022-matlab2tikz-matlab2tikz
%where you can also make suggestions and rate matlab2tikz.
%
\begin{tikzpicture}

\begin{axis}[%
width=6cm,
height=6.5cm,
at={(0in,0in)},
scale only axis,
unbounded coords=jump,
xmode=log,
xmin=0.001,
xmax=1,
xminorticks=true,
xlabel style={},
xlabel={Mesh size $h$},
ymode=log,
ymin=1e-07,
ymax=1,
yminorticks=true,
ylabel style={},
ylabel={},
axis background/.style={fill=white},
title style={},
title={$\lVert \overline z - \overline z_h\rVert_{L^2(\Omega)}$},
legend style={at={(0.97,0.03)}, anchor=south east, legend cell align=left, align=left, draw=white!15!black}
]

%https://tex.stackexchange.com/questions/113749/how-to-modify-the-image-of-an-legend-entry
\addlegendimage{only marks, mark = x, mark options={solid, black,scale=2}}
\addlegendentry{$s=0.05$}

\addlegendimage{only marks, mark = o, mark options={solid, black,scale=2}}
\addlegendentry{$s=0.25$}

\addlegendimage{only marks, mark = asterisk, mark options={solid, black,scale=2}}
\addlegendentry{$s=0.50$}

\addlegendimage{no markers,style =  solid}
\addlegendentry{no post proc.} 

\addlegendimage{no markers,style =  dotted}
\addlegendentry{post proc.}

\addlegendimage{line width=2pt}
\addlegendentry{$\mathcal O(h)$, $\mathcal O(h^2)$}

\addplot [color=black, mark=x, mark options={solid, black,scale=2},line width=1pt]
  table[row sep=crcr]{%
0.3536	0.26939\\
0.1768	0.13574\\
0.0884	0.056752\\
0.0442	0.028257\\
0.0221	0.013718\\
0.011	0.0066289\\
0.0055	0.0031792\\
0.0028	0.0014045\\
0.0014	nan\\
};
% \addlegendentry{$s = 0.05$}

\addplot [color=black, dotted, mark=x, mark options={solid, black,scale=2},line width=1pt]
  table[row sep=crcr]{%
0.3536	0.090477\\
0.1768	0.093431\\
0.0884	0.031245\\
0.0442	0.011918\\
0.0221	0.0042411\\
0.011	0.001434\\
0.0055	0.00049232\\
0.0028	0.00019508\\
0.0014	nan\\
};
% \addlegendentry{$s = 0.05$ (PP)}

\addplot [color=black, mark=o, mark options={solid, black,scale=2},line width=1pt]
  table[row sep=crcr]{%
0.3536	0.13442\\
0.1768	0.066736\\
0.0884	0.033254\\
0.0442	0.016722\\
0.0221	0.0083596\\
0.011	0.0041603\\
0.0055	0.0020329\\
0.0028	0.00092431\\
0.0014	nan\\
};
% \addlegendentry{$s = 0.25$}

\addplot [color=black, dotted, mark=o, mark options={solid, black,scale=2},line width=1pt]
  table[row sep=crcr]{%
0.3536	0.0666\\
0.1768	0.024359\\
0.0884	0.0049781\\
0.0442	0.0012801\\
0.0221	0.00033096\\
0.011	8.6548e-05\\
0.0055	2.7456e-05\\
0.0028	8.4754e-06\\
0.0014	nan\\
};
% \addlegendentry{$s = 0.25$ (PP)}

\addplot [color=black, mark=asterisk, mark options={solid, black,scale=2},line width=1pt]
  table[row sep=crcr]{%
0.3536	0.021818\\
0.1768	0.012591\\
0.0884	0.0059975\\
0.0442	0.0029942\\
0.0221	0.0014971\\
0.011	0.00074483\\
0.0055	0.00034179\\
nan	nan\\
nan	nan\\
};
% \addlegendentry{$s = 0.5$}

\addplot [color=black, dotted, mark=asterisk, mark options={solid, black,scale=2},line width=1pt]
  table[row sep=crcr]{%
0.3536	0.015637\\
0.1768	0.0070358\\
0.0884	0.0016028\\
0.0442	0.00039882\\
0.0221	9.9398e-05\\
0.011	2.3872e-05\\
0.0055	5.8388e-06\\
nan	nan\\
nan	nan\\
};
% \addlegendentry{$s = 0.5$  (PP)}

\addplot [color=black,line width=2pt]
  table[row sep=crcr]{%
0.3536	0.7072\\
0.1768	0.3536\\
0.0884	0.1768\\
0.0442	0.0884\\ 
0.0221	0.0442\\
0.011	0.022\\
0.0055	0.011\\
0.0028	0.0056\\
0.0014	0.0028\\
};
% \addlegendentry{$\mathcal O(h), \mathcal O(h^2)$}

\addplot [color=black, forget plot,line width=2pt]
  table[row sep=crcr]{%
0.3536	0.012503296\\
0.1768	0.003125824\\
0.0884	0.000781456\\
0.0442	0.000195364\\
0.0221	4.8841e-05\\
0.011	1.21e-05\\
0.0055	3.025e-06\\
0.0028	7.84e-07\\
0.0014	1.96e-07\\
};
\end{axis}
\end{tikzpicture}%
% This file was created by matlab2tikz.
%
%The latest updates can be retrieved from
%  http://www.mathworks.com/matlabcentral/fileexchange/22022-matlab2tikz-matlab2tikz
%where you can also make suggestions and rate matlab2tikz.
%
\begin{tikzpicture}

\begin{axis}[%
width=6cm,
height=6.5cm,
at={(0in,0in)},
scale only axis,
unbounded coords=jump,
xmode=log,
xmin=0.001,
xmax=1,
xminorticks=true,
xlabel style={},
xlabel={Mesh size $h$},
ymode=log,
ymin=1e-08,
ymax=1,
yminorticks=true,
ylabel style={},
ylabel={},
axis background/.style={fill=white},
title style={},
title={$\lVert \overline u - \overline u_h\rVert_{L^2(\Omega)}$ and $\lVert \overline u - \overline u_h\rVert_{H^s(\Omega)}$},
legend style={at={(0.97,0.03)}, anchor=south east, legend cell align=left, align=left, draw=white!15!black}
]

%https://tex.stackexchange.com/questions/113749/how-to-modify-the-image-of-an-legend-entry
\addlegendimage{only marks, mark = x, mark options={solid, black,scale=2}}
\addlegendentry{$s=0.05$}

\addlegendimage{only marks, mark = o, mark options={solid, black,scale=2}}
\addlegendentry{$s=0.25$}

\addlegendimage{only marks, mark = asterisk, mark options={solid, black,scale=2}}
\addlegendentry{$s=0.50$}

\addlegendimage{no markers,style =  solid}
\addlegendentry{$L^2$-error}

\addlegendimage{no markers,style =  dotted}
\addlegendentry{$H^s$-error}

\addlegendimage{line width=2pt}
\addlegendentry{$\mathcal O(h^{\frac{3}{2}})$, $\mathcal O(h^2)$}

\addplot [color=black, mark=x, mark options={solid, black,scale=2},line width=1pt]
  table[row sep=crcr]{%
0.3536	0.13857\\
0.1768	0.05708\\
0.0884	0.016835\\
0.0442	0.004415\\
0.0221	0.0013773\\
0.011	0.00046746\\
0.0055	0.00016525\\
0.0028	5.7558e-05\\
0.0014	nan\\
};
% \addlegendentry{$s = 0.05 (L^2)$}

\addplot [color=black, dotted, mark=x, mark options={solid, black,scale=2},line width=1pt]
  table[row sep=crcr]{%
0.3536	0.15774\\
0.1768	0.066956\\
0.0884	0.020611\\
0.0442	0.0056119\\
0.0221	0.0018191\\
0.011	0.0006384\\
0.0055	0.00023332\\
0.0028	8.3441e-05\\
0.0014	nan\\
};
% \addlegendentry{$s = 0.05(H^s)$}

\addplot [color=black, mark=o, mark options={solid, black,scale=2},line width=1pt]
  table[row sep=crcr]{%
0.3536	0.058655\\
0.1768	0.023263\\
0.0884	0.0060181\\
0.0442	0.0014904\\
0.0221	0.00040044\\
0.011	0.00010283\\
0.0055	3.0849e-05\\
0.0028	7.9687e-06\\
0.0014	nan\\
};
% \addlegendentry{$\text{L}^\text{2}\text{ error for s = 0.25}$}

\addplot [color=black, dotted, mark=o, mark options={solid, black,scale=2},line width=1pt]
  table[row sep=crcr]{%
0.3536	0.10305\\
0.1768	0.044962\\
0.0884	0.013785\\
0.0442	0.0041221\\
0.0221	0.0012969\\
0.011	0.00039839\\
0.0055	0.00013496\\
0.0028	4.082e-05\\
0.0014	nan\\
};
% \addlegendentry{$\text{H}^\text{s}\text{ error for s = 0.25}$}

\addplot [color=black, mark=asterisk, mark options={solid, black,scale=2},line width=1pt]
  table[row sep=crcr]{%
0.3536	0.0031609\\
0.1768	0.0014215\\
0.0884	0.00036551\\
0.0442	8.7986e-05\\
0.0221	2.2408e-05\\
0.011	6.1086e-06\\
0.0055	1.1771e-06\\
nan	nan\\
nan	nan\\
};
% \addlegendentry{$\text{L}^\text{2}\text{ error for s = 0.5}$}

\addplot [color=black, dotted, mark=asterisk, mark options={solid, black,scale=2},line width=1pt]
  table[row sep=crcr]{%
0.3536	0.0097581\\
0.1768	0.0051452\\
0.0884	0.0017609\\
0.0442	0.00060367\\
0.0221	0.00021451\\
0.011	7.884e-05\\
0.0055	2.4219e-05\\
nan	nan\\
nan	nan\\
};
% \addlegendentry{$\text{H}^\text{s}\text{ error for s = 0.5}$}

\addplot [color=black,line width=2pt]
  table[row sep=crcr]{%
0.3536	0.841062705448292\\
0.1768	0.297360571212795\\
0.0884	0.105132838181036\\
0.0442	0.0371700714015994\\
0.0221	0.0131416047726296\\
0.011	0.00461475893194867\\
0.0055	0.00163156366716105\\
0.0028	0.000592648293678468\\
0.0014	0.000209532813659341\\
};
% \addlegendentry{$\text{Error }\sim\text{ h}^{\text{3/2}}\text{, Error }\sim\text{ h}^\text{2}$}

\addplot [color=black, forget plot,line width=2pt]
  table[row sep=crcr]{%
0.3536	0.0025006592\\
0.1768	0.0006251648\\
0.0884	0.0001562912\\
0.0442	3.90728e-05\\
0.0221	9.7682e-06\\
0.011	2.42e-06\\
0.0055	6.05e-07\\
0.0028	1.568e-07\\
0.0014	3.92e-08\\
};
\end{axis}
\end{tikzpicture}%

\caption{Convergence rates of the discretization of the optimal control problem in 2D. 
Figure on the left-hand side presents approximations of the control~$z$. 
First order of convergence of the piecewise constant approximation can be observed. 
Application of the additional post-processing significantly improves the convergence properties, 
and we observe $h^{\min(2,3/2 + 2s)}$ convergence. 
On the right-hand side convergence of the approximation of the state~$u$ is shown. 
Convergence order of the piecewise linear finite element method measured in the $L^2(\Omega)$-norm depends 
on the choice of~$s$ and varies between $3/2$ and $2$, which can be attained for sufficiently large~$s$. 
Convergence order in $H^s(\Omega)$-norm is included for completeness.}
\label{fig:OPT_Plots}
%\caption{Convergence rates of the discretization of the optimal control problem. Results obtained in 2D and 3D are presented in top and bottom rows respectilvely. Figures on the left-hand side present approximations of the control~$z$. First order of convergence of the piecewise constant approximation can be observed both in 2D and 3D. Additional post-processing significantly improves the convergence properties and quadratic order can be obtained in a sufficiently regular case of~$s = 0.5$. On the right hand-side convergence of the approximation of the state~$u$ is shown. Order of the piecewise linear finite element method measured in the $L^2$-norm depends on the choice of~$s$ and varies between $3/2$ and $2$, which can be attained for sufficiently large~$s$ in 2D. Convergence order in $H^s$ norm is included for completeness.}
\end{figure}
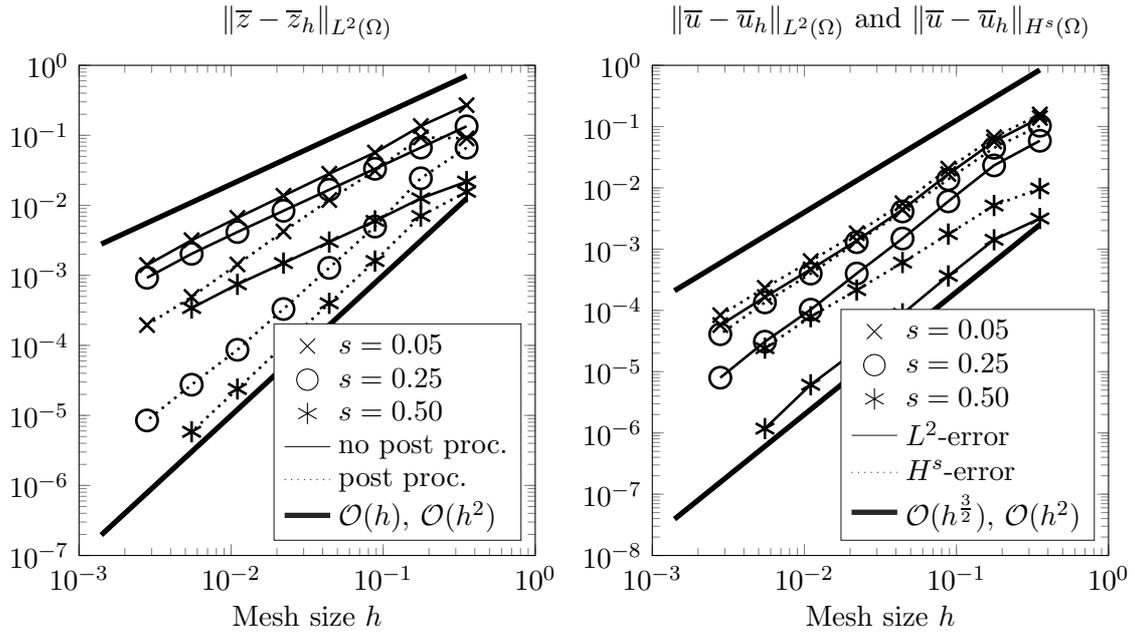
 
\bigskip

\begin{acknowledgement}
 We would like to thank Harbir Antil for giving a compact course on fractional PDEs
 as a preparation for this work.
 Furthermore, we would like to thank Johannes Pfefferer for many fruitful discussions during the
 preparation of this work.
\end{acknowledgement}

\bibliographystyle{plain}

\end{document}